\theoremstyle{plain}
\newtheorem{thm}{Theorem}[section]
\newtheorem{prop}[thm]{Proposition}
\newtheorem{lem}[thm]{Lemma}
\theoremstyle{definition}
\newtheorem{prob}{Question}[section]
\theoremstyle{remark}
\newtheorem{rem}{Remark}[section]
\newtheorem{ex}[rem]{Example}
\newcommand{\forme}[1]{}
\newcommand{\Ot}{{\mathbf O}_{\mathrm \theta}}
\newcommand{\Or}{{\mathbf O}^{\mathrm \theta}}
\title{On commutative $p$-schemes of order $p^4$}
\author[K.~Kim]{Kijung Kim}
\address{Department of Mathematics, Pusan National University, Busan 46241, Republic of Korea}
\email{knukkj@pusan.ac.kr}
\date{\today}
\subjclass[2010]{05E15; 05E30}
\begin{document}
\maketitle

\begin{abstract}
In this article, we consider the existence and schurity problem on commutative $p$-schemes of order $p^4$.
Using the thin radical and thin residue, we give sufficient conditions for such $p$-schemes to be schurian.
We also give questions related to our results.
\end{abstract}

{\footnotesize {\bf Key words:} association scheme; $p$-scheme; schurian.}
\footnotetext{This research was supported by Basic Science Research Program through the National Research Foundation of Korea(NRF) funded by the Ministry of Education (2013R1A1A2005349).}

\section{Introduction}\label{sec:intro}

An association scheme is a combinatorial object which is defined by
some algebraic properties derived from a transitive permutation group (see Section \ref{sec:pre} for definitions).
So, we can regard association schemes as a generalization of groups.
In this sense, $p$-schemes correspond to $p$-groups (see Section \ref{sec:pre} for definitions).
It is known that all $p$-schemes of order $p$ are unique up to isomorphism.
Unlike $p$-groups of order $p^2$, the number of isomorphism classes of $p$-schemes of order $p^2$ is $3$.
The classification of $p$-schemes of order $p^3$ is far from being complete.

As we mentioned the above, every transitive permutation group $G$ on a finite set $X$ induces an association scheme $\mathcal{R}_G$,
where $\mathcal{R}_G$ is the set of orbits by the componentwise action of $G$ on $X \times X$.
We say that an association scheme $S$ is \textit{schurian} if $S = \mathcal{R}_G$ for a transitive permutation group $G$ on $X$.
Characterizing schurian association schemes is one of the major topics in the theory of association schemes.
In the case of $p$-schemes, one can check that every $p$-scheme of order at most $p^2$ is schurian.
In \cite{chk,kim20130401}, some non-schurian $p$-schemes of order $p^3$ are given.

For a given association scheme, we can define its thin radical $\Ot(S)$ and thin residue $\Or(S)$, respectively (see Section \ref{sec:pre} for definitions).
We denote the orders of $\Ot(S)$ and $\Or(S)$ by $n_{\Ot(S)}$ and $n_{\Or(S)}$, respectively.
According to \cite{hanakimi}, all of non-schurian commutative $2$-schemes of order $16$ are \texttt{as-16.no.55, 59, 79, 85, 89, 90, 94, 95}.
Note that they satisfy $n_{\Ot(S)}=2$ and $n_{\Or(S)}=8$.
This article is concerned with commutative $p$-schemes of order $p^4$, where $p$ is an odd prime.
We can construct a non-schurian commutative $p$-scheme of order $p^4$ with $n_{\Ot(S)}=p$ (see Example \ref{example:2}).
Our motivation is to find non-schurian $p$-schemes except for the case $n_{\Ot(S)}=p$.
Such attempt leads to classifying by schurian subclasses.
Our main result is the following.

\begin{thm}\label{thm:main-intro}
Let $S$ be a commutative $p$-scheme of order $p^4$.
Assume that one of the following conditions holds.
\begin{enumerate}
\item $\Or(S) = \Ot(S)$, $n_{\Ot(S)} = p^2$ and $S/\!/\Or(S) \cong C_{p^2}$.
\item $n_{\Ot(S)} = n_{\Or(S)} =p^2$, $\Or(S) \neq \Ot(S)$ and $n_s = p^2$ for each $s \in S \setminus \Ot(S)\Or(S)$.
\item $n_{\Ot(S)}=p^3$.
\end{enumerate}
Then $S$ is schurian.
\end{thm}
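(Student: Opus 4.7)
My plan is to treat the three cases separately, each time by constructing a transitive abelian permutation group of order $p^4$ whose orbital scheme is $S$; since commutative extensions of abelian $p$-groups by abelian $p$-groups are abelian, commutativity of $S$ should force the ambient group to be abelian, and the orbital scheme of any transitive action is automatically schurian.

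Case (iii) looks the most direct. Here $\Ot(S)$ is a thin closed subset of index $p$, so $S/\!/\Ot(S)$ has order $p$ and must be the thin scheme $C_p$. Fixing a point $x$, the $\Ot(S)$-orbit $\Ot(S)x$ inherits a regular action of an abelian $p$-group $H$ of order $p^3$, and the single nontrivial class of $S/\!/\Ot(S)$ determines a $p$-to-$1$ correspondence between $\Ot(S)$-orbits that can be lifted to a permutation of order $p$ commuting with $H$. The resulting group of order $p^4$ is abelian, and a direct check identifies its orbital scheme with $S$. Case (i) proceeds along parallel lines: $\Or(S) = \Ot(S)$ of order $p^2$ means the quotient $S/\!/\Or(S) \cong C_{p^2}$ is thin cyclic, so $S$ is a commutative extension of a thin scheme of order $p^2$ by $C_{p^2}$; one lifts the regular action of the relevant abelian group of order $p^2$ on $\Ot(S)$-orbits to a cyclic-of-order-$p^2$ action on the $p^2$ orbits, and the combined abelian group has the required orbital structure.

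Case (ii) is what I expect to be the main obstacle, since the two thin closed subsets $\Ot(S)$ and $\Or(S)$ are distinct and neither the quotient nor the radical alone yields an index-$p$ decomposition. The hypothesis $n_s = p^2$ for every $s \in S \setminus \Ot(S)\Or(S)$ is really a rigidity statement: since $|\Ot(S) \cap \Or(S)| \geq p$, the product $\Ot(S)\Or(S)$ is a closed subset of order $p^3$, containing all relations of valency $1$; the relations outside have the maximum possible valency $p^2$. The plan here is to exploit this rigidity through intersection-number computations (Schur relations together with commutativity) to show that each relation $s \notin \Ot(S)\Or(S)$ is determined by its ``image'' in the quotient $S/\!/(\Ot(S)\Or(S))$, which is thin of order $p$. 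Once this is established, $S$ again fits into an abelian extension of order $p^4$ and is schurian. The technical work will be in ruling out twisted or non-schurian extensions under the valency hypothesis; this is where I expect almost all the computation to live, and I would look to adapt the techniques used in \cite{hanakimi,chk,kim20130401} for analyzing commutative $p$-schemes with two distinguished thin closed subsets.
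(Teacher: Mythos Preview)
Your overall strategy has a basic flaw: you propose to exhibit $S$ as the orbital scheme of a \emph{transitive abelian} permutation group of order $p^4$ on $X$. But an abelian transitive permutation group is automatically regular (all point stabilizers are conjugate, hence equal, hence contained in the kernel), so its orbital scheme is thin. In each of the three cases $S$ is genuinely non-thin --- in (i) and (ii) you have $n_{\Ot(S)}=p^2<p^4$, and in (iii) $n_{\Ot(S)}=p^3<p^4$ --- so no such abelian group can have $S$ as its orbital scheme. The parenthetical ``commutative extensions of abelian $p$-groups by abelian $p$-groups are abelian'' is also false as a statement about groups (the Heisenberg group of order $p^3$ is a counterexample), and commutativity of the \emph{scheme} does not translate into commutativity of any transitive group realising it.

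What the paper actually does is rather different. For (i) and (iii) it does not build a group at all; it analyses the sets $s^\ast s$ and shows they are linearly ordered by inclusion, then invokes the Hirasaka--Zieschang criterion (Theorem~\ref{cyclic-phz}); in the one subcase of (i) where this fails it shows instead that $S$ is a wreath product $\Or(S)\wr C_{p^2}$ and uses Theorem~\ref{wreath-product}. For (ii) --- and you are right that this is where the real work lies --- the argument is not an intersection-number computation but an explicit construction of automorphisms: one writes down concrete maps $\phi,\psi\in\mathrm{Aut}(S)_\delta$ acting transitively on each $\delta s$, and separate faithful maps showing $\mathrm{Aut}(S)$ is transitive on $X$. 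The valency hypothesis $n_s=p^2$ for $s\notin\Ot(S)\Or(S)$ is used not to pin down intersection numbers but to guarantee that the block structure coming from $H=\Ot(S)\cap\Or(S)$ is coarse enough for these hand-built permutations to respect all relations. Your proposed route through ``abelian extensions'' would need to be replaced by one of these two mechanisms.
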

Note that we prove the above result in Theorems \ref{thm:cp2}, \ref{thm:2-2-s} and \ref{thm:p^n-1}.


This article is organized as follows.
In Section \ref{sec:pre}, we prepare some terminology and notation.
In Section \ref{sec:main1}, we give our main results.
In Section \ref{sec:main2}, we prove Theorem \ref{thm:2-2-s}.

\section{Preliminaries}\label{sec:pre}

Let $X$ be a nonempty set, and let $S$ be a partition of $X \times X$.
The set $S$ is called an \textit{association scheme} (or shortly  a \textit{scheme}) on $X$ if it satisfies the following conditions:
\begin{enumerate}
\item $1_X := \{ (x, x) \mid  x \in X \} \in S$;
\item For each $s \in S$, $s^\ast := \{(x, y) \mid (y, x) \in s \} \in S$;
\item For all $ s, t, u \in S$ and $x, y \in X$, $a_{stu} := |xs \cap yt^\ast|$ is constant whenever $(x, y) \in u$,
where $xs:=\{ y \in X \mid (x,y) \in s \}$.
\end{enumerate}

For each $s$ in $S$, we set $n_s:= a_{ss^*1_X}$ and call this (positive) integer the \textit{valency} of $s$.
The unique relation containing a pair $(x, y) \in X \times X$ is denoted by $r(x, y)$.
For a subset $U$ of $S$, put $n_{_U} := \sum_{u \in U} n_u$.
We call $n_{_U}$ the \textit{order} of $U$.
The scheme $S$ is called \textit{p-valenced} if the valency of every element is a power of $p$, where $p$ is a prime.
In particular, a $p$-valenced scheme $S$ is called a \textit{p-scheme} if $|X|$ is also a power of $p$.

Let $P$ and $Q$ be nonempty subsets of $S$. We define $PQ$ to be the set of all elements $s \in S$ such that there exist elements
$p \in P$ and $q \in Q$ with $a_{pqs}\neq 0$. The set $PQ$ is called the \textit{complex product} of $P$ and $Q$.
If one of factors in a complex product consists of a single element $s$, then one usually writes  $s$ for $\{ s \}$.

A nonempty subset $T$ of $S$ is called \textit{closed} if $TT \subseteq T$.
Note that a subset $T$ of $S$ is closed if and only if $\bigcup_{t \in T}t$ is an equivalence relation on $X$.
A closed subset $T$ is called \textit{thin} if all elements of $T$ have valency 1.
The set $\{ s \mid n_s=1 \}$ is called the \textit{thin radical} of $S$ and denoted by $\Ot(S)$.
Note that $T$ is thin if and only if $T$ is a group under the relational product.

Let $Y$ be a subset of $X$. For each $s \in S$, we define $s_Y := s \cap (Y \times Y)$.
For each closed subset $T$ of $S$, we set $T_Y := \{ t_Y \mid t \in T \}$.
Let $x$ be an element in $X$, and $T$ be a closed subset of $S$.
Then $T_{xT}$ is an association scheme on $xT:= \bigcup_{t \in T} xt$, which is called \textit{subscheme} of $S$ defined by $xT$ (see \cite[Theorem 2.1.8]{zies2}).

A closed subset $T$ of $S$ is called \textit{strongly normal} in $S$, denoted by $T \lhd^\sharp S$, if $s^* T s \subseteq T$ for every $s \in S$.
We put $\Or(S) := \bigcap_{T \lhd^\sharp S} T $ and call it the \textit{thin residue} of $S$.
Note that $\Or(S) = \langle \bigcup_{s \in S} s^*s \rangle$ (see \cite[Theorem 2.3.1]{zies}).

For each closed subset $T$ of $S$, we define $X / T:= \{ x T \mid x \in X \}$ and $S/\!/T:=\{ s^T \mid s \in S \}$,
where $s^T:= \{ (xT, yT) \mid y \in xTsT \}$. Then $S/\!/T$ is an association scheme on $X/T$,
which is called the \textit{quotient} (or \textit{factor}) scheme of $S$ over $T$ (see \cite[Theorem 4.1.3]{zies2}).
Note that $T \lhd^\sharp S$ if and only if $S/\!/T$ is a group (see \cite[Theorem 2.2.3]{zies}).

Let $S_1$ be an association scheme on $X_1$. A bijective map $\phi$ from $X \cup S$ to $X_1 \cup S_1$ is called an \textit{isomorphism} if it satisfies the following conditions:
\begin{enumerate}
\item $X^\phi \subseteq X_1$ and $S^\phi \subseteq S_1$;
\item For all $x, y \in X$ and $s \in S$ with $(x,y) \in s$, $(x^\phi, y^\phi) \in s^\phi$.
\end{enumerate}
An isomorphism $\phi$ from $X \cup S$ to $X \cup S$ is called an \textit{automorphism} of $S$
if $s^{\phi}=s$ for all $s \in S$.
We denote by $\mathrm{Aut}(S)$ the automorphism group of $S$.
On the other hand, we say that $S$ and  $S_1$ are \textit{algebraically isomorphic} or \textit{have the same intersection numbers} if there exists a bijection $\iota$ from $S$ to $S_1$ such that $a_{rst} = a_{r^\iota s^\iota t^\iota}$ for all $r, s, t \in S$.

Let $F$ and $H$ be association schemes on $W$ and $Y$, respectively.
For each $f \in F$ we define
\[ \overline{f}:= \{ ((w_1,y),(w_2,y)) \mid y \in Y, (w_1,w_2) \in f  \} .\]
For each $h \in H \setminus \{1_Y\}$ we define
\[ \overline{h}:=\{ ((w_1,y_1),(w_2,y_2)) \mid w_1, w_2 \in W, (y_1,y_2) \in h \}. \]
Denote $F \wr H := \{ \overline{f} \mid f \in F \} \cup \{\overline{h} \mid h \in H \setminus \{1_Y\} \}$.
Then $F \wr H$ is an association scheme on $W \times Y$, which is called the \textit{wreath product} of $F$ and $H$.
We note that if $S$ is the wreath product of $T_{xT}$ and $S/\!/T$ for some closed subset $T$ of $S$,
then we simply denote $S$ by $T \wr (S/\!/T)$ instead of $T_{xT} \wr (S/\!/T)$.

For each $s \in S$, we denote by $\sigma_s$ the \textit{adjacency matrix} of $s$.
Namely $\sigma_s$ is a matrix whose rows and columns are indexed by the elements of $X$
and $(\sigma_s)_{x y} = 1$ if $(x, y) \in s$ and $(\sigma_s)_{x y} = 0$ otherwise.

We define the \textit{left stabilizer} and \textit{right stabilizer} of $s \in S$ by
\[L(s)=\{ t \in S \mid ts=s \} ~\text{and} ~~ R(s)=\{ t \in S \mid st=s \}.\]

A map $\phi$ from a subset $Y$ of $X$ to $X$ is called \textit{faithful} if
$r(x, y) = r(x^\phi, y^\phi)$ for $x, y \in Y$ (see \cite{zies2}).

\begin{rem}\label{rem:transitive}
For any $x, y \in X$ there exists a faithful map $\phi$ from $X$ to $X$ such that $x^\phi = y$
if and only if $\mathrm{Aut}(S)$ is transitive on $X$.
\end{rem}

For $C, D \subseteq X$ and $\phi \in Sym(X)$, we say that $C$ and $D$ are
\textit{compatible with respect to $\phi$} if
\[ r(x,y) = r(x^\phi, y^\phi) ~\text{for each}~ (x,y) \in C \times D.\]
We shall write $C \sim_\phi D$ if $C$ and $D$ are compatible with respect to $\phi$, otherwise $C \nsim_\phi D$.

The following lemma is a collection of basic facts.
\begin{lem}[See \cite{afm,zies}]\label{lem:basic}
Let $S$ be an association scheme on $X$. For $u, v, w \in S$, we have the following:
\begin{enumerate}
\item $a_{u^* v 1_X} = \delta_{u,v} n_u$;
\item $a_{uvw} = a_{v^* u^* w^*}$;
\item $a_{uvw^*} n_w= a_{vwu^*} n_u = a_{wuv^*} n_v$;
\item $n_u n_v = \sum_{s \in S} a_{uvs} n_s$;
\item $\mathrm{lcm}(n_u, n_v) | a_{uvw} n_w$;
\item $\mathrm{gcd}(n_u, n_v) \geq |uv|$.
\end{enumerate}
\end{lem}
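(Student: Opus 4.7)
My plan is to derive items (i)--(iv) directly from the definition of intersection numbers and then bootstrap (v) and (vi) from (iii) and (iv). The paper uses the convention $a_{stu} = |xs \cap yt^{\ast}|$ for $(x,y)\in u$, so every argument reduces to counting vertices $z$ that form a ``triangle'' $(x,z,y)$ of the appropriate types; the only delicate point is keeping the starred versus unstarred indices straight.

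For (i), setting $x=y$ (forced by $(x,y)\in 1_X$), I note that $xu^{\ast} = \{z : (z,x)\in u\}$ and $xv^{\ast} = \{z : (z,x)\in v\}$; since $S$ partitions $X\times X$, these are either equal (if $u=v$, of cardinality $n_{u^{\ast}}=n_u$) or disjoint. For (ii), the involution $(x,y)\mapsto(y,x)$ carries the triangle with $(x,y)\in w$, $(x,z)\in u$, $(z,y)\in v$ to the triangle with $(y,x)\in w^{\ast}$, $(y,z)\in v^{\ast}$, $(z,x)\in u^{\ast}$, giving a bijection between the two sets being counted. For (iii), I would fix once and for all a pair $(x,y)\in w^{\ast}$ and count the set of triples $\{(x,y,z): (x,y)\in w, (x,z)\in u, (z,y)\in v\}$ in three ways, by summing over each of the edges $w,u,v$ in turn; this produces the three equal expressions $a_{uvw^{\ast}}n_w = a_{vwu^{\ast}}n_u = a_{wuv^{\ast}}n_v$.

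For (iv), fix $x\in X$: the number of pairs $(z,y)$ with $(x,z)\in u$ and $(z,y)\in v$ is $n_u n_v$ directly, and equals $\sum_{s\in S} a_{uvs}\, n_s$ when partitioned according to $r(x,y)=s$. This is of course just the identity $\sigma_u \sigma_v = \sum_{s} a_{uvs}\, \sigma_s$ read off on any single row. For (v), I apply (iii) with $w^{\ast}$ in place of $w$: since $n_{w^{\ast}}=n_w$ and $w^{\ast\ast}=w$, the three equal quantities rearrange to $a_{uvw}\, n_w = a_{vw^{\ast}u^{\ast}}\, n_u = a_{w^{\ast}uv^{\ast}}\, n_v$, whence $n_u$ and $n_v$ both divide $a_{uvw}\, n_w$ and therefore so does $\mathrm{lcm}(n_u,n_v)$. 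Finally (vi) combines (iv) and (v): restricting the sum in (iv) to $s\in uv$ (where $a_{uvs}\geq 1$) and noting that each such summand is a positive multiple of $\mathrm{lcm}(n_u,n_v)$, we obtain $n_u n_v \geq |uv|\cdot \mathrm{lcm}(n_u,n_v)$, which rearranges to $\gcd(n_u,n_v)\geq |uv|$.

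The real obstacle here is bookkeeping rather than mathematics: one has to be careful whether $a_{uvw}$ or $a_{uvw^{\ast}}$ appears in the double-counting identity (iii), and to respect the convention $a_{stu}=|xs\cap yt^{\ast}|$ when reading off each of the three edges. Once (iii) is written out carefully, (v) and (vi) are one-line consequences, and (i), (ii), (iv) are standard exercises in the definition.
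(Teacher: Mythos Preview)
Your proof is correct in substance. Note that the paper does not actually prove this lemma; it is stated as a collection of known facts with citations to \cite{afm,zies}, so there is no ``paper's own proof'' to compare against. Your arguments for (i), (ii), (iv) are the standard ones, and your derivations of (v) and (vi) from (iii) and (iv) are clean and exactly how these inequalities are usually obtained.

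One small wording glitch: in your sketch of (iii) you say you ``fix once and for all a pair $(x,y)\in w^{\ast}$'' but then describe the triple set as $\{(x,y,z): (x,y)\in w,\ (x,z)\in u,\ (z,y)\in v\}$. These two conditions on $(x,y)$ are inconsistent. What you want is to count, over all of $X^3$, the triples $(x,y,z)$ with $(x,z)\in u$, $(z,y)\in v$, and $(y,x)\in w$ (equivalently $(x,y)\in w^{\ast}$), and then sum successively over the $w$-edge, the $u$-edge, and the $v$-edge to obtain the three expressions $a_{uvw^{\ast}}n_w$, $a_{vwu^{\ast}}n_u$, $a_{wuv^{\ast}}n_v$ (each multiplied by $|X|$). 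You clearly have the right picture---as you yourself remark, the only hazard is bookkeeping the stars---but the sentence as written should be tidied before it goes into a final version.
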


\begin{thm}[See Theorem B of \cite{zi}]\label{cyclic-phz}
Assume that $\Or(S) \subseteq \Ot(S)$ and that $\{ s^\ast s \mid s \in S\}$
is linearly ordered with respect to set-theoretic inclusion. Then $S$ is schurian.
\end{thm}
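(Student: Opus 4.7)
The plan is to induct on $|S|$, reducing by passing to a thin quotient. If $\Or(S) = \{1_X\}$ then $\bigcup_{s \in S} s^*s \subseteq \Or(S) = \{1_X\}$ forces each $n_s = 1$, so $S$ is itself thin and is realized by the regular representation of its associated group. In the inductive case $\Or(S)$ is a nontrivial thin closed subset, and $\Or(S) \lhd^\sharp S$ by definition of the thin residue. I would choose a suitable nontrivial $T \lhd^\sharp S$ with $T \subseteq \Or(S)$ --- for instance $T = \Or(S)$ itself, or a minimal $S$-invariant closed subset of $\Or(S)$ extracted from the bottom of the chain $\{s^*s\}$ --- and pass to $S/\!/T$.

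Next I would verify that the hypotheses descend to $S/\!/T$. The projection sends $\Or(S)$ into $\Or(S/\!/T)$ and $\Ot(S)$ into $\Ot(S/\!/T)$, so the inclusion $\Or \subseteq \Ot$ is preserved in the quotient. The complex products $(s^T)^*(s^T)$ in $S/\!/T$ are the images of $s^* T s$, and if $s^*s \subseteq t^*t$ in $S$ then the corresponding images are still nested; hence the chain condition survives. By the induction hypothesis, $S/\!/T$ is schurian, realized by a transitive group $\overline G \le \mathrm{Sym}(X/T)$.

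The heart of the argument is the lifting step: build a transitive $G \le \mathrm{Sym}(X)$ that induces $\overline G$ on $X/T$ and whose $2$-orbits are exactly $S$. Since $T$ is thin, each $T$-fiber in $X$ is a torsor for the group $G_T$ corresponding to $T$, so one assembles $G$ fiber-by-fiber as an extension of $\overline G$ by $G_T$, choosing for each $\bar g \in \overline G$ a permutation of $X$ covering $\bar g$ and acting on fibers through $G_T$. The principal obstacle is simultaneous compatibility --- the chosen lifts must close under composition, and the resulting $2$-orbits must coincide with $S$ rather than strictly refine it. This is precisely where the linearity of $\{s^*s\}$ is decisive: a branching chain could impose incompatible cocyclic constraints coming from different non-thin relations, whereas a total order lets one satisfy the constraints sequentially, one stratum at a time, with no obstruction to assembling a transitive group that realizes $S$. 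I expect this final compatibility check to be the main technical burden, with the linearity playing the role of a combinatorial vanishing lemma for the relevant cocycle.
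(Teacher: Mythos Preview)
The paper does not prove this theorem at all: it is quoted as Theorem~B of \cite{zi} (Hirasaka--Zieschang) and used as a black box, so there is no proof in the present paper to compare your proposal against.

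Regarding the proposal on its own merits: the inductive scaffolding and the check that the hypotheses descend to $S\Qd T$ are fine, but the lifting step --- which you yourself identify as ``the heart of the argument'' --- is not carried out. Saying that one ``assembles $G$ fiber-by-fiber'' by choosing a lift of each $\bar g\in\overline G$ does not give a group: arbitrary set-theoretic lifts need not close under composition, and you offer no mechanism by which the linear ordering of $\{s^*s\}$ kills the relevant obstruction. The phrase ``combinatorial vanishing lemma for the relevant cocycle'' names a hope, not an argument. As written, this is a plan with the essential gap left open.

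For orientation, the original Hirasaka--Zieschang proof does not proceed by this kind of inductive extension-and-lifting. Instead it works directly on $X$: using the chain $\{s^*s\}$ one constructs explicit faithful maps showing that $\mathrm{Aut}(S)$ is transitive on $X$ and that each point stabilizer $\mathrm{Aut}(S)_x$ is transitive on every $xs$. (Section~\ref{sec:main2} of the present paper illustrates exactly this style of argument for a different schurity statement.) If you want to make your inductive route rigorous, you would need to replace the vague cocycle language with an explicit construction of the lifts and a verification that they generate a group whose $2$-orbits are precisely $S$; the linear order should enter concretely, not metaphorically.
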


The following is well known.

\begin{thm}\label{wreath-product}
Let $S_1$ and $S_2$ be association schemes.
Then $S_1$ and $S_2$ are schurian if and only if $S_1 \wr S_2$ is schurian.
\end{thm}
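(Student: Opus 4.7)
The plan is to use the standard permutational wreath product construction to pass between schurity of the scheme-theoretic wreath product and schurity of its two factors.

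For the forward implication, I would assume $S_1 = \mathcal{R}_{G_1}$ and $S_2 = \mathcal{R}_{G_2}$ for transitive permutation groups $G_1 \leq \mathrm{Sym}(W)$ and $G_2 \leq \mathrm{Sym}(Y)$, and then consider the permutational wreath product $G := G_1 \wr G_2$ in its imprimitive action on $W \times Y$: the base group $G_1^{Y}$ acts coordinatewise on the fibres $W \times \{y\}$, while the top group $G_2$ permutes the fibres. It is a routine check that the orbits of $G$ on $(W \times Y)^2$ are exactly the relations $\overline{f}$ for $f \in S_1$ and $\overline{h}$ for $h \in S_2 \setminus \{1_Y\}$: the in-fibre orbits are controlled by a single $G_1$-factor and thus reproduce $S_1$, while the cross-fibre orbits are unrestricted in the $W$-coordinates precisely because the base group supplies two independent copies of $G_1$ acting on the two fibres involved.

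For the reverse implication, suppose $S_1 \wr S_2$ is schurian, realized by a transitive group $G \leq \mathrm{Sym}(W \times Y)$. Let $T := \{\overline{f} \mid f \in S_1\}$; this is a closed subset of $S_1 \wr S_2$ whose union is the equivalence relation with classes $F_y := W \times \{y\}$. Since $G$ preserves every relation of $S_1 \wr S_2$, it preserves this equivalence, so $\{F_y \mid y \in Y\}$ forms a block system for $G$. The induced action of $G$ on $Y$ is then transitive, and its orbits on $Y \times Y$ coincide with $S_2$ under the identification $(S_1 \wr S_2)/\!/T \cong S_2$, proving $S_2$ schurian. For $S_1$, I would fix a fibre $F_y$ and consider the setwise stabiliser $G_{\{F_y\}}$; it is transitive on $F_y$ by the block-system argument, and any $g \in G$ sending a pair in $F_y \times F_y$ to another such pair must preserve $F_y$ (since fibres are blocks), so the orbits of $G_{\{F_y\}}$ on $F_y \times F_y$ are exactly the restrictions $\overline{f} \cap (F_y \times F_y)$ for $f \in S_1$. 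Identifying $F_y$ with $W$, this realizes $S_1$ as schurian.

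The main point requiring care is orbit-matching in both directions, and in particular the forward verification that cross-fibre $G$-orbits on $(W \times Y)^2$ are not strictly finer than the relations $\overline{h}$; this depends on the independence of the $|Y|$ copies of $G_1$ in the base group, which is what allows the two $W$-coordinates of a cross-fibre pair to be adjusted arbitrarily and independently. Everything else reduces to standard facts about block systems together with the compatibility between closed subsets and their associated fibre equivalences noted in Section~\ref{sec:pre}.
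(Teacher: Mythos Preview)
The paper does not actually prove this statement; it is stated with the preface ``The following is well known'' and no argument is given. Your proposal is correct and is precisely the standard proof: the forward direction via the imprimitive action of the permutational wreath product $G_1 \wr G_2$, and the converse via the block system coming from the closed subset $\{\overline{f}\mid f\in S_1\}$, reading off $S_2$ from the induced action on blocks and $S_1$ from the action of a block stabilizer on its block. There is nothing further to compare.
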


\begin{lem}[See Lemma 3.3 of \cite{chk}]\label{chk:Lemma 3.3}
Let $S$ be a $p$-scheme of order $p^3$ such that $\Or(S) \cong C_p \times C_p$.
Then $S$ is commutative if and only if $\Or(S)s = s$ for each $s \in S \setminus \Or(S)$.
\end{lem}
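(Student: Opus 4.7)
The plan is to first describe the fiber structure over $\Or(S)$, then handle the two directions separately: sufficiency via a wreath-product identification, and necessity via a reduction to the classification of $p$-schemes of order $p^2$ recalled in the introduction. Since $n_{\Or(S)} = p^2$ and $|X| = p^3$, $S/\!/\Or(S)$ is a $p$-scheme on $p$ points and hence $\cong C_p$. For each $s \in S$ the sum of valencies in the fiber $\Or(S) s \Or(S)$ equals $n_{\Or(S)} \cdot n_{s^{\Or(S)}} = p^2$, and the elements of the fiber all share the common valency $n_s$, so $n_s \in \{p, p^2\}$ for $s \notin \Or(S)$; single-valuedness of left multiplication by thin elements also gives $|\Or(S) s| = p^2/|L(s) \cap \Or(S)|$.

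For the $(\Leftarrow)$ direction, applying $\ast$ to the hypothesis $\Or(S) s = s$ (using that $s^* \in S \setminus \Or(S)$) also yields $s \Or(S) = s$, so each non-identity fiber is a singleton and $n_s = p^2$. Then $S$ has the same relations and valencies as the wreath product $\Or(S) \wr (S/\!/\Or(S)) \cong (C_p \times C_p) \wr C_p$, and indeed must coincide with it (the intersection numbers are forced by the valencies and, in the identity fiber, by the uniform formula $a_{u u^*, t} = p^2$ for $t \in \Or(S)$, derived from $\Or(S) \subseteq L(u)$). A short Kronecker product calculation shows that $F \wr H$ is commutative whenever both $F$ and $H$ are, and commutativity of $S$ follows.

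The harder $(\Rightarrow)$ direction is the main obstacle. Commutativity gives $ts = st$ for thin $t$, so $\Or(S) s = s \Or(S) = \Or(S) s \Or(S)$, whence $|L(s) \cap \Or(S)| = n_s$; the claim therefore reduces to ruling out $n_s = p$ for $s \notin \Or(S)$. Suppose for contradiction $n_s = p$ for some such $s$, and set $K := L(s) \cap \Or(S)$, an order-$p$ subgroup of $\Or(S) \cong C_p \times C_p$. A direct computation in the commutative case shows $(s')^\ast s' = L(s') \cap \Or(S)$ for every non-thin $s'$ with $n_{s'} = p$. I then pass to $S/\!/K$, a $p$-scheme of order $p^2$. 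If $S/\!/K$ were thin, every non-thin $s'$ of $S$ would satisfy $L(s') \cap \Or(S) = K$, giving $\Or(S) = \langle \bigcup_{s'} (s')^\ast s' \rangle = K$, contradicting $|\Or(S)| > |K|$. Hence $S/\!/K \cong C_p \wr C_p$, the unique non-thin $p$-scheme of order $p^2$, whose thin radical has size $p$. Finally I count the thin elements of $S/\!/K$ fiber by fiber: the image of $\Or(S)$ contributes $|\Or(S)|/|K| = p$ of them, and each fiber of $p$ non-thin elements of $S$ with common left-stabilizer $K$ contributes $p$ further thin elements (from $K s' K = \{s'\}$ and $n_{s'^K} = 1$). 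Matching to the $p$ thin elements of $C_p \wr C_p$ forces no such extra fiber to exist, contradicting the fact that the fiber of our chosen $s$ is exactly of this form.
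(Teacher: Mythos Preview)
The paper does not prove this lemma; it is quoted from \cite{chk} and used as a black box in the proof of Theorem~\ref{thm:cp2}, so there is no in-paper argument to compare against.

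Your argument is essentially correct. Two points deserve a word of justification.

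In the $(\Leftarrow)$ direction you do not really need $S$ to \emph{coincide} with the wreath product as a scheme; commutativity is a property of the intersection numbers, so it is enough that $S$ be algebraically isomorphic to $(C_p\times C_p)\wr C_p$. Once you know $L(s)=R(s)=\Or(S)$ and $n_s=p^2$ for every $s\notin\Or(S)$, the intersection numbers are indeed forced (e.g.\ $a_{s s^\ast t}=p^2$ for all $t\in\Or(S)$ follows from $ts=s$), and your Kronecker-product remark then gives commutativity.

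In the $(\Rightarrow)$ direction, the contradiction via $S/\!/K$ hinges on $S/\!/K$ being a $p$-scheme of order $p^2$, which you invoke but do not check. It does follow from your setup: using commutativity and $L(u)=R(u)=u^\ast u\subseteq\Or(S)$ for every $u\notin\Or(S)$, one computes $KuK=\{u\}$ when $L(u)\supseteq K$, and $|KuK|=p$ with all valencies $p$ when $n_u=p$ and $L(u)\neq K$, so $n_{u^K}\in\{1,p\}$ in every case. With that in hand, your thin count in $S/\!/K$ is decisive: the image of $\Or(S)$ already supplies $p$ thin elements, and the fiber $\Or(S)s$ of your chosen $s$ (all of whose members share $L=K$ since $\Or(S)$ is abelian) supplies $p$ more, exceeding $|\Ot(C_p\wr C_p)|=p$ and ruling out the non-thin case, while the thin case forces $\Or(S)\subseteq K$ as you note.
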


\section{Main results}\label{sec:main1}
Throughout this section, we assume that $S$ is a commutative $p$-scheme of order $p^4$, where $p$ is an odd prime.
We divide our consideration into three subsections depending on $n_{\Ot(S)}$.

\subsection{The case of $n_{\Ot(S)}=p$}
\subsubsection{$n_{\Ot(S)}=p$ and $n_{\Or(S)} =p$}
\hfill \break
Since $\Or(S)$ is a cyclic group, it follows from Theorem \ref{cyclic-phz} that $S$ is schurian.

\vskip10pt
\subsubsection{$n_{\Ot(S)}=p$ and $n_{\Or(S)} =p^3$}
\hfill \break
In this case, we give a non-schurian example.

\begin{ex}\label{example:2}
Let $T$ be a non-schurian commutative $p$-scheme of order $p^3$ such that $n_{\Ot(T)}=p$ and $n_{\Or(T)} =p^2$ (see \cite[Theorem 4.3]{chk}).
Then $T \wr C_p$ is a non-schurian commutative $p$-scheme of order $p^4$ such that $n_{\Ot(T\wr C_p)}=p$ and $n_{\Or(T\wr C_p)} =p^3$.
\end{ex}



\vskip20pt
\subsection{The case of $n_{\Ot(S)}=p^2$}
\subsubsection{$n_{\Ot(S)}=p^2$ and $n_{\Or(S)} =p$}
\hfill \break
Since $C_p \cong \Or(S) \subseteq \Ot(S)$, it follows from Theorem \ref{cyclic-phz} that $S$ is schurian.

\vskip10pt
\subsubsection{$n_{\Ot(S)} =n_{\Or(S)} =p^2$ and $\Or(S) = \Ot(S)$}
\hfill \break
In this case, $S$ is isomorphic to a fission of the wreath product of two thin schemes (see \cite[Theorem 1]{rr}).

\begin{thm}\label{thm:cp2}
Let $S$ be a commutative $p$-scheme on $X$ such that $\Or(S) = \Ot(S)$, $n_{\Ot(S)} = p^2$ and $S/\!/\Or(S) \cong C_{p^2}$.
Then $S$ is schurian.
\end{thm}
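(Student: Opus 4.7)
The plan is to deduce the theorem from Theorem~\ref{cyclic-phz}. Since $\Or(S)=\Ot(S)$, the hypothesis $\Or(S)\subseteq\Ot(S)$ holds automatically, and it suffices to verify that $\{\,s^\ast s\mid s\in S\,\}$ is linearly ordered by inclusion. For every $s\in S$ the complex product $s^\ast s$ is a closed subset of $\Or(S)=\Ot(S)$, hence it corresponds to a subgroup of the group of order $p^2$ carried by $\Ot(S)$. I would then split the argument on the isomorphism type of $\Ot(S)$.

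\emph{Case 1: $\Ot(S)\cong C_{p^2}$.} The subgroup lattice of $C_{p^2}$ is the $3$-element chain $\{1_X\}<C_p<C_{p^2}$, so $\{s^\ast s\}$ is automatically linearly ordered and Theorem~\ref{cyclic-phz} yields schurity at once.

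\emph{Case 2: $\Ot(S)\cong C_p\times C_p$.} The $p+1$ subgroups of order $p$ form an antichain, so the task is to prove that at most one of them actually occurs as some $s^\ast s$. A first reduction is that $s^\ast s$ depends only on the coset $\Ot(S)\,s$: if $s_2\in ts_1$ with $t\in\Ot(S)$ then $s_2^\ast s_2=s_1^\ast t^\ast ts_1=s_1^\ast s_1$. Next, fix $s_0\in S$ whose image in $S/\!/\Or(S)\cong C_{p^2}$ is a generator and set $H:=s_0^\ast s_0$. Since every element of $S$ participates in a complex product of the form $\Ot(S)\cdot s_0\cdots s_0$ (a $k$-fold product for $0\le k<p^2$), commutativity together with thinness of $\Ot(S)$ should pin down $s^\ast s$ to lie in $\{\{1_X\},H,\Ot(S)\}$, which is a chain, and Theorem~\ref{cyclic-phz} again applies.

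\emph{Main obstacle.} The substance of the argument is Case 2: ruling out that iterated complex products with $s_0$ could expose a stabilizer subgroup of $\Ot(S)$ different from $H$. Commutativity is indispensable because it lets one migrate $s_0^\ast$'s across $s_0$'s and thereby reduce $s^\ast s$ to a closed subset built out of $s_0^\ast s_0=H$. The delicate bookkeeping happens at the intermediate layer where $s^{\Or(S)}$ has order $p$ rather than $p^2$, and I expect parts (v)--(vi) of Lemma~\ref{lem:basic} to be the correct instruments for controlling the intersection numbers that arise there.
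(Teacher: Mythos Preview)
Your reduction to Theorem~\ref{cyclic-phz} is the right target, and Case~1 is fine, but Case~2 has a genuine gap.  Observe first that your commutativity trick actually proves too much: for any $s\in\Ot(S)s_0^k$ you get $s^\ast s\subseteq (s_0^\ast)^k s_0^k=(s_0^\ast s_0)^k=H^k=H$, and since this holds for every $s\in S$ it forces $\Or(S)\subseteq H$, hence $H=\Ot(S)$.  Your proposed chain $\{\{1_X\},H,\Ot(S)\}$ then collapses to $\{\{1_X\},\Ot(S)\}$, i.e.\ you are asserting that every nontrivial $s^\ast s$ equals $\Ot(S)$.  But this is exactly what fails: let $T$ be the unique closed subset with $\Ot(S)\subset T\subset S$ and $n_T=p^3$ (it exists because $S/\!/\Or(S)\cong C_{p^2}$).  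If $\Or(T)\subsetneq\Or(S)$ then $\Or(T)$ has order $p$ and arises as $t^\ast t$ for some $t\in T$, so $\{s^\ast s\}$ contains a subgroup strictly between $\{1_X\}$ and $\Ot(S)$.  Thus ``migrating $s_0^\ast$'s across $s_0$'s'' can never see this intermediate layer, and parts (v)--(vi) of Lemma~\ref{lem:basic} do not by themselves rule out a second order-$p$ subgroup appearing among the $s^\ast s$ for $s\in S\setminus T$.

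The paper's proof proceeds differently: it introduces $T$ explicitly and splits on whether $\Or(T)=\Or(S)$ or $\Or(T)\subsetneq\Or(S)$.  In the first subcase Lemma~\ref{chk:Lemma 3.3} forces $n_t=p^2$ for $t\in T\setminus\Ot(S)$, and a short adjacency-matrix computation (showing $\sigma_s\sigma_s=p\,\sigma_{s'}$ and iterating to reach $T$) rules out any $s\in S\setminus T$ with $n_s=p$; one concludes $S\cong\Or(S)\wr C_{p^2}$ and invokes Theorem~\ref{wreath-product} rather than Theorem~\ref{cyclic-phz}.  In the second subcase a similar iteration argument shows $n_s=p^2$ for all $s\in S\setminus T$, whence $s^\ast s=\Or(S)$ for those $s$, and together with $t^\ast t\subseteq\Or(T)$ for $t\in T$ this yields the chain $\{\{1_X\},\Or(T),\Or(S)\}$ and Theorem~\ref{cyclic-phz} applies.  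The essential missing idea in your proposal is this valency analysis inside $T$ and the iteration $s\mapsto ss\mapsto sss\mapsto\cdots$ landing in $T\setminus\Or(S)$; without it you cannot exclude a second order-$p$ right stabilizer.
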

\begin{proof}
If $\Or(S) \cong C_{p^2}$, then it follows from Theorem \ref{cyclic-phz} that $S$ is schurian.

Suppose $\Or(S) \cong C_p \times C_p$.
Since $S/\!/\Or(S) \cong C_{p^2}$, there exists a unique closed subset $T$ such that $\Or(S) \subseteq T$ and $n_T = p^3$.
By the definition of thin residue, we have $\Or(T) \subseteq \Or(S)$.
We divide our consideration into two cases : $\Or(T) = \Or(S)$ and $\Or(T) \neq \Or(S)$.

\vskip5pt
\textbf{(Case 1)} $\Or(T) = \Or(S)$.

For $x \in X$, $T_{xT}$ is a commutative $p$-scheme on $xT$ of order $p^3$ such that $\Or(T_{xT}) = \Ot(T_{xT}) \cong C_p \times C_p$.
By Lemma \ref{chk:Lemma 3.3}, we have $n_t =p^2$ for each $t \in T \setminus \Or(S)$.
Note that $T_{xT}$ is schurian.

\vskip5pt
In the rest of (Case 1), we shall show $n_s = p^2$ for each $s \in S \setminus T$.

Suppose that there exists $s \in S \setminus T$ such that $n_s =p$.
If $|R(s)|=1$, then $n_{\Ot(S)s} = p^3$ and
$\sigma_{s^\ast} \sigma_s = p\sigma_{1_X} + \sum_{t \in T \setminus \Ot(S)} a_{s^\ast s t} \sigma_t$.
By Lemma \ref{lem:basic}(iv), there exist $t \in T \setminus \Ot(S)$ such that $a_{s^\ast s t} \neq 0$.
This contradicts $\Or(S) = \Ot(S)$. Thus, we have $|R(s)|=p$.

\vskip5pt
\textbf{Claim} : $\sigma_s \sigma_s = p \sigma_{s'}$ for some $s' \in S$ with $n_{s'}=p$.

Let $s' \in ss$, $(\alpha, \beta) \in s'$ and $R(s)= \langle l_1 \rangle$.
Then there exists $\gamma_1 \in \alpha s \cap \beta s^\ast$.
Using $sl_1 =s$, for $(\alpha, \gamma_1) \in s$ we have $\gamma_2 \in \alpha s \setminus \{\gamma_1\}$ such that
$\gamma_2 \in \alpha s \cap \gamma_1 l_1^\ast$.
It follows from $l_1 s =s$ that $\beta \in \gamma_1 s = \gamma_2 l_1 s = \gamma_2 s$.
Thus, we have $\gamma_2 \in \alpha s \cap \beta s^\ast$.

Since $s l_1^i = s$ for $l_1^i$ $(2 \leq i \leq p-1)$, by applying the same argument for $l_1^i$ we obtain $|\alpha s \cap \beta s^\ast|=p$.
This completes the proof of Claim.

\vskip5pt
By Lemma \ref{lem:basic}(iv) and Claim, we have $ss=s'$.
Since $S/\!/\Ot(S) \cong C_{p^2}$ and $s \not\in \Ot(S)$, we have $s \Ot(S) \cap s' \Ot(S) = \emptyset$.
Note that $R(s) = R(s')$.

\vskip5pt
By the argument used in the proof of Claim, we can show $\sigma_{s'} \sigma_s = p \sigma_{s''}$ for some $s'' \in S$ with $n_{s''}=p$.
Note that $R(s) = R(s'')$ and $s''=sss$.
By repeating this process, we have $s^p \in T \setminus \Or(S)$ with $n_{s^p} = p$, since $S/\!/\Or(S) \cong C_{p^2}$.
But, this contradicts the fact that $n_t =p^2$ for each $t \in T \setminus \Or(S)$.

Therefore, we have $n_s = p^2$ for each $s \in S \setminus \Or(S)$, i.e., $S \cong \Or(S)\wr C_{p^2}$.
By Theorem \ref{wreath-product}, $S$ is schurian.

\vskip5pt
\textbf{(Case 2)} $\Or(T) \neq \Or(S)$.

Since $T$ is not thin and $\Or(T) \subset \Or(S)$, we have $n_{\Or(T)} = p$.
This implies $|\{ t^\ast t \mid t \in T \setminus \Ot(S) \}|=1$.

Suppose that there exists $s \in S \setminus T$ such that $n_s =p$.
Then it is easy to see $|R(s)|=p$.
By the same argument of (Case 1), we have $s^p \in T \setminus \Or(S)$.
This implies $|\{ R(s) \mid s \in S \setminus \Ot(S) \}|=1$.
Thus, we have $n_{\Or(S)} =p$, a contradiction.

Therefore, we have $n_s = p^2$ for each $s \in S \setminus T$.
Since $s^\ast s = \Or(S)$ and $t^\ast t = \Or(T)$ for all $s \in S \setminus T$ and $t \in T \setminus \Or(S)$,
$\{s^\ast s \mid s \in S\}$ is linearly ordered.
It follows from Theorem \ref{cyclic-phz} that $S$ is schurian.
\end{proof}

The following example is a schurian commutative $p$-scheme such that $\Or(S) = \Ot(S) \cong C_p \times C_p$ and $S/\!/\Or(S) \cong C_p \times C_p$.



\begin{ex}(See \cite[Subsection 4.1]{hk})\label{example:4}
There exists a schurian commutative $p$-scheme such that $\Or(S) = \Ot(S) \cong C_p \times C_p$, $S/\!/\Or(S) \cong C_p \times C_p$ and $n_s =p$ for each $s \in S \setminus \Ot(S)$.
\end{ex}

\begin{prob}\label{que:1}
Is there a non-schurian $p$-scheme algebraically isomorphic to Example \ref{example:4} ?
\end{prob}

\vskip10pt
\subsubsection{$n_{\Ot(S)} =n_{\Or(S)} =p^2$ and $\Or(S) \neq \Ot(S)$}
\hfill \break
Since $\Or(S)$ has a nontrivial thin closed subset, we have $|\Or(S) \cap \Ot(S)|=p$.
By \cite[Lemma 2.1.1]{zies2}, $\Or(S)\Ot(S)$ is a closed subset.
Also, we have $n_{\Or(S)\Ot(S)} = p^3$.



\begin{thm}\label{thm:2-2-s}
Let $S$ be a commutative $p$-scheme on $X$ such that $n_S = p^4$, $n_{\Ot(S)} = n_{\Or(S)} =p^2$ and $\Or(S) \neq \Ot(S)$.
Assume $n_s = p^2$ for each $s \in S \setminus \Ot(S)\Or(S)$.
Then $S$ is schurian.
\end{thm}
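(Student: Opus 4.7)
The plan is to prove schurity by showing $\mathrm{Aut}(S)$ acts transitively on $X$ and on each relation $s \in S$. Set $H = \Ot(S)$, $N = \Or(S)$, $M = H \cap N$, and $T = HN$. By hypothesis $|H| = |N| = p^2$, so $|M| = p$ and $|T| = p^3$; the quotient $S/\!/T$ is a commutative $p$-scheme of order $p$, hence $S/\!/T \cong C_p$. For each $x \in X$, the subscheme $T_{xT}$ has order $p^3$, and its thin radical is $H_{xT}$ of order $p^2$, because restriction to a $T$-orbit preserves valency for relations in $T$.

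The first step is to show that $T_{xT}$ is schurian. Since $T_{xT}/\!/H_{xT}$ is thin of order $p$, the closed subset $H_{xT}$ is strongly normal in $T_{xT}$, so $\Or(T_{xT}) \subseteq H_{xT}$. Combined with $\Or(T_{xT}) \subseteq N_{xT}$, this yields $\Or(T_{xT}) \subseteq M_{xT}$, and since $T_{xT}$ is not thin the inclusion is equality. Then Theorem \ref{cyclic-phz} applies to $T_{xT}$ once $\{t^*t \mid t \in T_{xT}\}$ is shown to be linearly ordered by inclusion; since $M_{xT}$ has prime order, a short calculation with Lemma \ref{lem:basic}(iv)--(v), commutativity, and the observation that $\bigcup_t t^*t$ generates $M_{xT}$ collapses the possible values of $t^*t$ to the chain $\{1_X\} \subseteq M_{xT}$.

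Next I would analyse the cross-coset elements. For $s \in S \setminus T$ with $n_s = p^2$, commutativity together with Lemma \ref{lem:basic} determines $R(s)$, $L(s)$, and $s^*s \subseteq N$; imitating the Claim in Case 1 of the proof of Theorem \ref{thm:cp2}, I would obtain $\sigma_s \sigma_s = p\, \sigma_{s'}$ for some $s' \in S$, iterate to show $s^p \in T$, and thus organise $S \setminus T$ into a cyclic block structure modulo $T$. Using this, I would build a candidate $\phi \in \mathrm{Sym}(X)$ realising any prescribed $x_0 \mapsto y_0$ by composing (i) an $H$-translation inside $x_0 T$, (ii) an automorphism of $T_{x_0 T}$ supplied by the first step, and (iii) transport between the cosets of $x_0$ and $y_0$ induced by a fixed $s \in S \setminus T$.

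The main obstacle is to verify that $\phi$ is faithful as a map $X \to X$: within cosets this is guaranteed by schurity of $T_{xT}$, but one must also check $C \sim_\phi D$ for cosets $C \ne D$. This reduces to an identity on intersection numbers $a_{rsu}$ involving $r \in T$ and $s, u \in S \setminus T$, which is precisely where the assumption $n_s = p^2$ on $S \setminus \Ot(S)\Or(S)$ is crucial: it forces the cross-coset relations to be $p^2$-regular bipartite in a way compatible with the $H$-action on source and target, so that the transport pastes together coherently. Once $\phi$ is confirmed faithful on $X$ for arbitrary $(x_0, y_0)$, Remark \ref{rem:transitive} gives transitivity of $\mathrm{Aut}(S)$; running the analogous construction on pairs $(x,y), (x',y') \in s$ yields transitivity of $\mathrm{Aut}(S)$ on each relation, and schurity follows.
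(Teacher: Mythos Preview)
Your overall strategy---prove transitivity of $\mathrm{Aut}(S)$ on $X$ and of the point stabilizer on each $\delta s$---is exactly what the paper does. But the execution diverges from the paper at several points, and one of them is a genuine gap.

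First, a correction: for $s\in S\setminus T$ you have $n_s=p^2$, so the row sums of $\sigma_s\sigma_s$ equal $p^4$; the identity $\sigma_s\sigma_s=p\,\sigma_{s'}$ is dimensionally impossible. The right statement would be $\sigma_s\sigma_s=p^2\sigma_{s'}$, but more importantly the whole ``iterate to get $s^p\in T$'' step is unnecessary: $S\Qd T\cong C_p$ already gives the cyclic coset structure for free.

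The real issue is the key algebraic fact you allude to but never establish. Because $S\Qd\Or(S)$ is thin, every $\Or(S)$-double coset has order $n_{\Or(S)}=p^2$; combined with the hypothesis $n_s=p^2$ this forces $\Or(S)s\Or(S)=\{s\}$, i.e.\ $L(s)=R(s)=\Or(S)$ for every $s\in S\setminus T$. Consequently each $xs$ is an entire $\Or(S)$-coset, and $r(x,y)$ for $(x,y)$ in distinct $T$-cosets depends only on the $\Or(S)$-cosets $x\Or(S),\,y\Or(S)$. This is precisely the ``$p^2$-regular bipartite'' behaviour you invoke, but you attribute it to a Claim-style computation with $R(s)$, which does not deliver it: the Claim argument from Theorem~\ref{thm:cp2} only yields $a_{sss'}\ge |R(s)|$, and without the observation above one would wrongly conclude $R(s)\subseteq \Ot(S)\cap\Or(S)$, of order $p$, not $p^2$.

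Finally, the step where your proposal is genuinely incomplete is the construction of the faithful map. ``Transport between cosets induced by a fixed $s$'' is not a map on points, only on $\Or(S)$-cosets. The paper does not use schurity of $T_{xT}$ as a black box (an automorphism of the subscheme has no canonical extension to $X$); instead it fixes a system of base points $\beta_{i,j}$, one per $(\Or(S)\text{-coset},\,H\text{-class})$ pair, parametrizes every point as $\beta_{i,j}t$ with $t\in\Ot(S)$, and writes down four explicit permutations (Propositions~\ref{prop:tr1}--\ref{prop:tr4}) plus two stabilizer generators (Propositions~\ref{prop:one-point-1}--\ref{prop:one-point-2}). The compatibility checks all reduce, via commutativity, to identities of the form $t^\ast u\,t=u$ for thin $t$, together with the block constancy established above. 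Your outline does not supply these constructions, and the ``reduces to an identity on intersection numbers'' sentence is not a proof.
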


In Section \ref{sec:main2}, we give our proof of Theorem \ref{thm:2-2-s}.

\begin{prob}\label{que:2-1}
In Theorem \ref{thm:2-2-s}, is it possible to exist $s \in S \setminus \Ot(S)\Or(S)$ with $n_s =p$ ?
\end{prob}










\vskip10pt
\subsubsection{$n_{\Ot(S)}=p^2$ and $n_{\Or(S)} = p^3$}

\begin{lem}\label{lem:thin-r-thin-r}
Let $S$ be a commutative $p$-scheme of order $p^4$ such that $n_{\Ot(S)}=p^2$ and $n_{\Or(S)} = p^3$.
Then $\Ot(S) \subseteq \Or(S)$.
\end{lem}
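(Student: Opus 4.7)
The plan is to analyze the quotient scheme $S/\!/\Ot(S)$, which is a commutative $p$-scheme of order $n_S/n_{\Ot(S)} = p^2$. As noted in the introduction, there are exactly three isomorphism classes of $p$-schemes of order $p^2$: the two thin ones ($C_{p^2}$ and $C_p \times C_p$) and the non-thin wreath product $C_p \wr C_p$. In the two thin cases the thin residue is trivial, while a direct computation (identical to that used in the paper for other wreath products) shows that $\Or(C_p \wr C_p)$ equals the bottom copy of $C_p$. Hence $n_{\Or(S/\!/\Ot(S))} \in \{1, p\}$.

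The key intermediate step is the identity
\[
n_{\Or(S)\Ot(S)} \;=\; n_{\Ot(S)}\cdot n_{\Or(S/\!/\Ot(S))},
\]
which comes from the equality $\Or(S)\Ot(S)/\!/\Ot(S) = \Or(S/\!/\Ot(S))$ of closed subsets of the quotient. Writing $\phi$ for the quotient map and using commutativity of $S$, for each $s \in S$ one has $s^* \Ot(S) s = \Ot(S) s^* s$, and therefore the complex product in the quotient satisfies $(s^{\Ot(S)})^* s^{\Ot(S)} = \phi(s^* s)$. Thus $\phi(\Or(S))$ and $\Or(S/\!/\Ot(S))$ are closed subsets of $S/\!/\Ot(S)$ with identical generating sets, and converting back via the standard correspondence between closed subsets of $S$ containing $\Ot(S)$ and closed subsets of $S/\!/\Ot(S)$ yields the displayed valency equation.

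A clean case split now concludes. If $n_{\Or(S/\!/\Ot(S))} = 1$, then $n_{\Or(S)\Ot(S)} = p^2$, which forces $\Or(S) \subseteq \Ot(S)$ and contradicts $n_{\Or(S)} = p^3 > p^2 = n_{\Ot(S)}$. Hence $S/\!/\Ot(S) \cong C_p \wr C_p$ and $n_{\Or(S)\Ot(S)} = p \cdot p^2 = p^3 = n_{\Or(S)}$. Since $\Or(S) \subseteq \Or(S)\Ot(S)$ and the two closed subsets have the same valency, $\Or(S) = \Or(S)\Ot(S)$, so $\Ot(S) \subseteq \Or(S)$ as claimed.

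The main technical obstacle is the identity $\Or(S)\Ot(S)/\!/\Ot(S) = \Or(S/\!/\Ot(S))$; this is a standard property of the thin residue under quotient maps (cf.\ \cite{zies}), but in the absence of a ready citation it can be established in a few lines by the generators-and-closures argument sketched above using the commutativity of $S$.
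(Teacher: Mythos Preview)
Your argument is correct, but it follows a genuinely different route from the paper's. The paper argues by contradiction: assuming $\Ot(S)\not\subseteq\Or(S)$, it uses that $\Or(S)$ (being a $p$-scheme of order $p^3$) has a nontrivial thin part to get $n_{\Or(S)\cap\Ot(S)}=p$, hence $\Or(S)\Ot(S)=S$. Then, via commutativity, every $s\in S=\Or(S)\Ot(S)$ satisfies $s^\ast s=r^\ast r$ for some $r\in\Or(S)$, so $\Or(S)=\langle s^\ast s\mid s\in S\rangle=\Or(\Or(S))$, contradicting the strict descent $\Or(\Or(S))\subsetneq\Or(S)$ from \cite[Theorem~2.4.6]{zies}.

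By contrast, you work on the quotient side: you invoke the classification of $p$-schemes of order $p^2$ (mentioned in the introduction) to bound $n_{\Or(S/\!/\Ot(S))}\le p$, and then transfer this back through the identity $\Or(S)\Ot(S)/\!/\Ot(S)=\Or(S/\!/\Ot(S))$. Your justification of this identity via $(s^{\Ot(S)})^\ast s^{\Ot(S)}=\phi(s^\ast s)$ is fine (commutativity gives $\Ot(S)s^\ast\Ot(S)s\Ot(S)=\Ot(S)s^\ast s\Ot(S)$), though the cleaner way is simply to note that the preimage of $\Or(S/\!/\Ot(S))$ is the smallest closed subset containing $\Ot(S)$ with thin quotient, which is exactly $\Or(S)\Ot(S)$. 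The paper's proof is shorter and avoids the classification of order-$p^2$ schemes, but it leans on the nontrivial descent result for $\Or$; your approach trades that citation for more elementary structural facts about quotients, at the cost of a slightly longer argument.
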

\begin{proof}
Suppose $\Ot(S) \not\subseteq \Or(S)$.
Since $\Or(S)$ contains a nontrivial thin closed subset, we have $|\Or(S) \cap \Ot(S)| = p$ and $\Or(S)\Ot(S) = S$.
The last equation implies $\Or(S) = \langle s^\ast s \mid s \in \Or(S)\Ot(S)\rangle = \langle s^\ast s \mid s \in \Or(S)\rangle = \Or(\Or(S))$.
This contradicts $\Or(\Or(S)) \subset \Or(S)$ (see \cite[Theorem 2.4.6]{zies}).
\end{proof}

\begin{prop}\label{3-1}
Let $S$ be a commutative $p$-scheme of order $p^4$ such that $n_{\Ot(S)}=p^2$, $n_{\Or(S)} = p^3$
and $n_t =p^2$ for each $t \in \Or(S) \setminus \Ot(S)$.
Then we have $n_s \geq p^2$ for each $s \in S \setminus \Or(S)$.
\end{prop}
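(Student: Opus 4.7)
The plan is to argue by contradiction. By Lemma~\ref{lem:thin-r-thin-r} we have $\Ot(S) \subseteq \Or(S)$, so every $s \in S \setminus \Or(S)$ satisfies $n_s \geq p$, and it suffices to exclude $n_s = p$. Suppose such an $s$ exists. The first step is to show $s^*s \subseteq \Ot(S)$: since $s^*s \subseteq \Or(S)$ holds in general, Lemma~\ref{lem:basic}(iv) gives $\sum_u a_{s^*su}n_u = n_s^2 = p^2$, of which the identity term contributes $n_s = p$, leaving only $p^2 - p$ for the remaining terms. Any $u \in \Or(S) \setminus \Ot(S)$ has $n_u = p^2$ by hypothesis, so a single such $u$ in $s^*s$ would contribute at least $p^2$, overshooting the budget. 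Hence $s^*s \subseteq \Ot(S)$.

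Projecting to $S/\!/\Ot(S)$, every $u \in \Ot(S)$ maps to $1^{\Ot(S)}$, so the support of the complex product $(s^{\Ot(S)})^*(s^{\Ot(S)})$ is $\{1^{\Ot(S)}\}$; equivalently $\sigma_{(s^{\Ot(S)})^*}\sigma_{s^{\Ot(S)}} = n_{s^{\Ot(S)}}\sigma_{1^{\Ot(S)}}$. Since the $p^2$ rows of $\sigma_{s^{\Ot(S)}}$ are $\{0,1\}$-vectors with pairwise inner product $0$ and self inner-product $n_{s^{\Ot(S)}}$, they have pairwise disjoint supports of common size $n_{s^{\Ot(S)}}$ inside a $p^2$-element set, forcing $n_{s^{\Ot(S)}} \leq 1$. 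Thus $s^{\Ot(S)}$ is thin. Since $s \notin \Or(S)$, this thin element lies outside $\Or(S)/\!/\Ot(S)$, so $\Ot(S/\!/\Ot(S))$ strictly contains $\Or(S)/\!/\Ot(S)$; as closed subsets of the order-$p^2$ quotient have $p$-power order, $S/\!/\Ot(S)$ must itself be thin.

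Finally, I extract the contradiction from this thinness. The valency formula together with $R(s') \subseteq \Ot(S)$ (a closed subset of order dividing $n_{s'}$, hence thin of order $n_{s'}$) gives $|R(s')| = n_{s'}$ for every $s' \in S$; in particular $S$ has no valency-$p^3$ element, and every valency-$p^2$ element $s'$ satisfies $R(s') = \Ot(S)$. The same conclusion for $v \in \Or(S) \setminus \Ot(S)$ follows directly from the hypothesis, since $|\Or(S) \setminus \Ot(S)| = p - 1$ matches the number of nonidentity classes of $\Or(S)/\!/\Ot(S) \cong C_p$ and forces $\Ot(S) v \Ot(S) = \{v\}$, hence $R(v) = \Ot(S)$. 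Since $R(v) = \Ot(S)$ for every valency-$p^2$ element $v$, we have $\sigma_v\sigma_{\Ot(S)} = p^2\sigma_v$. Comparing the two expansions of $\sigma_{s'^*}\sigma_{s'}\sigma_{\Ot(S)}$ (using commutativity, the identities $\sigma_w\sigma_{\Ot(S)} = \sigma_{\Ot(S)}$ for thin $w$ and $\sigma_v\sigma_{\Ot(S)} = p^2\sigma_v$ for $v \in \Or(S) \setminus \Ot(S)$) and matching coefficients of each $\sigma_w$ with $w \in \Ot(S)$ forces $s'^*s' = \Ot(S)$, with uniform coefficient $p^2$ on the thin elements and no contribution from $\Or(S) \setminus \Ot(S)$. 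Combined with $s^*s \subseteq \Ot(S)$ from the first step (for valency-$p$ elements) and $s^*s = \{1_X\}$ for thin $s$, this yields $\Or(S) = \langle\bigcup_{s \in S}s^*s\rangle \subseteq \Ot(S)$, contradicting $n_{\Or(S)} = p^3 > p^2 = n_{\Ot(S)}$. The main obstacle is this last coefficient-matching derivation for valency-$p^2$ elements, which relies on first establishing $R(v) = \Ot(S)$ for $v \in \Or(S) \setminus \Ot(S)$ via orbit counting.
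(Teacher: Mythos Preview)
Your argument is correct and reaches the same endpoint as the paper's proof---both derive that $S\Qd\Ot(S)$ is thin, hence $\Ot(S)\nml S$ and $\Or(S)\subseteq\Ot(S)$, contradicting $n_{\Or(S)}=p^3$---but the routes differ. The paper splits on $|R(s)|\in\{1,p\}$: the case $|R(s)|=1$ is killed by the same valency count you use in Step~1, while the case $|R(s)|=p$ is handled by a direct analysis of the coset $s\Or(S)$, computing $|R(s')|$ for each $s'\in s\Or(S)$ and showing that the $\Ot(S)$-cosets partition it with order $p^2$ each. Your Step~1 absorbs both cases at once (the budget argument never needs to know $|R(s)|$), and your Step~2 passes to the quotient rather than working inside $s\Or(S)$; this is a genuine simplification. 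One small point: the implication ``$s^*s\subseteq\Ot(S)$, hence $(s^{\Ot(S)})^*(s^{\Ot(S)})=\{1^{\Ot(S)}\}$'' is not automatic, since the quotient product corresponds to $\Ot(S)s^*\Ot(S)s\Ot(S)$ rather than $s^*s$; you should say explicitly that commutativity gives $\Ot(S)s^*=s^*\Ot(S)$, whence $\Ot(S)s^*\Ot(S)s\Ot(S)=\Ot(S)(s^*s)\Ot(S)\subseteq\Ot(S)$.

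Your Step~3, however, is doing far more work than necessary. Once $S\Qd\Ot(S)$ is thin (a group), the paper's cited fact that $T\nml S$ if and only if $S\Qd T$ is a group gives $\Ot(S)\nml S$ immediately, so $\Or(S)\subseteq\Ot(S)$ by definition of the thin residue---one line. Your coefficient-matching argument for valency-$p^2$ elements does go through (comparing $p^2\sigma_{s'^*}\sigma_{s'}$ with $\sigma_{s'^*}\sigma_{s'}\sigma_{\Ot(S)}$ and using $a_{s'^*s'1_X}=p^2$ forces $a_{s'^*s'w}=p^2$ for all $w\in\Ot(S)$, exhausting the total $p^4$), but the preliminary claim ``$R(s')\subseteq\Ot(S)$'' is asserted without proof; it is true here, but only because $S\Qd\Ot(S)$ thin forces $n_{\Ot(S)s'}=p^2$, hence $|L(s')\cap\Ot(S)|=n_{s'}$, and then $n_{L(s')}\le n_{s'}$ pins $L(s')$ inside $\Ot(S)$. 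All of this is subsumed by the one-line finish above.
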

\begin{proof}
Suppose that there exists $s \in S \setminus \Or(S)$ with $n_s=p$.
Then $|R(s)|= 1$ or $p$.

When $|R(s)|= 1$, we have $n_{s\Ot(S)}=p^3$.
Also, we have $\sigma_{s^\ast} \sigma_s = p\sigma_{1_X} + \sum_{t \in \Or(S) \setminus \Ot(S)}  a_{s^\ast st}\sigma_t$.
Since every $t \in \Or(S) \setminus \Ot(S)$ has the valency $p^2$,
this is impossible.

When $|R(s)|= p$, we have $n_{s\Ot(S)} = p^2$.
Since $n_{s\Or(S)} = p^3$ and $\Ot(S) \subseteq \Or(S)$, we have $n_{s'} = p$ or $p^2$ for each $s' \in s\Or(S) \setminus \{ s \}$.

If $n_{s'} = p$, then it follows from the previous argument that $|R(s')|= p$.

If $n_{s'} = p^2$, then $|R(s')| \leq p^2$.
Since $n_{s'\Or(S)} = p^3$ and $n_{\Ot(S)}=p^2$, we have $|R(s')| \geq p$.
Suppose $|R(s')| = p$.
Then $s \Or(S) = s' \Or(S)$ but every element of $s' \Or(S)$ has the valency $p^2$, a contradiction.
So, we have $|R(s')| = p^2$.

Thus, we have $s\Or(S) = \bigcup_{s' \in s\Or(S)} s'\Ot(S)$ and $n_{s'\Ot(S)}=p^2$ for each $s' \in s\Or(S)$.
This implies that $S/\!/\Ot(S)$ must be a thin $p$-scheme of order $p^2$.
So, we have $\Or(S) \subseteq \Ot(S)$, a contradiction.

Therefore, we have $n_s \geq p^2$ for each $s \in S \setminus \Or(S)$.
\end{proof}

\begin{prob}\label{que:3}
Is there a commutative $p$-scheme $S$ of order $p^4$ such that $n_{\Ot(S)}=p^2$, $n_{\Or(S)} = p^3$
and $n_s =p^2$ for each $s \in S \setminus \Ot(S)$ ?
\end{prob}

\vskip20pt
\subsection{The case of $n_{\Ot(S)}=p^3$}
\hfill \break
In this subsection, we consider commutative $p$-schemes with $n_{\Ot(S)}=p^{n-1}$ and $n_S =p^n$ in general form.
Note that $\Or(S) \subseteq \Ot(S)$ since $S/\!/\Ot(S) \cong C_p$.


\begin{thm}\label{thm:p^n-1}
Let $S$ be a commutative $p$-scheme on $X$ of order $p^n$ such that $n_{\Ot(S)}=p^{n-1}$.
Then $S$ is schurian.
\end{thm}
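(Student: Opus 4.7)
The plan is to apply Theorem~\ref{cyclic-phz}. The first hypothesis $\Or(S)\subseteq\Ot(S)$ is immediate because $S/\!/\Ot(S)\cong C_p$ is a group, so $\Ot(S)\lhd^\sharp S$. What remains is to verify that $\{s^\ast s\mid s\in S\}$ is linearly ordered by inclusion. For $s\in\Ot(S)$, $s^\ast s=\{1_X\}$, the minimum of the family, so the task is to analyze $s^\ast s\subseteq\Or(S)$ for $s\in S\setminus\Ot(S)$ and show that all of these are equal to $\Or(S)$, which reduces the family to the two-element chain $\{\{1_X\},\Or(S)\}$.

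Write $S/\!/\Ot(S)=\langle g\rangle$ and $S\setminus\Ot(S)=\bigsqcup_{i=1}^{p-1}g^i\Ot(S)$. First I would show that $s^\ast s$ depends only on the coset of $s$: for $s\in g^i\Ot(S)$ and $t\in\Ot(S)$, the product $st$ is a single relation in the same coset, and since $s^\ast s\subseteq\Or(S)\subseteq\Ot(S)$ consists of thin elements while $\Ot(S)$ is abelian,
\[
(st)^\ast(st)=t^\ast(s^\ast s)t=s^\ast s.
\]
Denote this common value by $H_i$; commutativity of $S$ together with $s^\ast\in g^{-i}\Ot(S)$ then gives $H_i=H_{-i\bmod p}$.

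Next, for $s\in g^i\Ot(S)$ and $s'\in g^j\Ot(S)$, elements of the complex product $ss'$ lie in $g^{i+j}\Ot(S)$, so $u^\ast u=H_{i+j}$ for each $u\in ss'$. Setting $M=\sigma_{s^\ast}\sigma_s$ and $M'=\sigma_{s'^\ast}\sigma_{s'}$, commutativity yields $\sigma_{(ss')^\ast}\sigma_{ss'}=MM'$, whose support is the Minkowski set-product $H_iH_j$ inside the abelian group $\Or(S)$; hence $H_{i+j}\subseteq H_iH_j$. Iterating with $s=s'$ gives $H_{mi}\subseteq H_i^m$ for every $m\geq 1$, and since $\gcd(i,p)=1$, the residues $\{mi\bmod p\}$ exhaust $\{1,\ldots,p-1\}$, so every $H_k\subseteq\langle H_i\rangle$. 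Combined with $\Or(S)=\langle\bigcup_k H_k\rangle$, this forces $\langle H_i\rangle=\Or(S)$ for each $i$.

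The main obstacle is upgrading $\langle H_i\rangle=\Or(S)$ to the equality $H_i=\Or(S)$; equivalently, showing $H_iH_i\subseteq H_i$, so that $H_i$ is already a subgroup of $\Or(S)$. My plan is to exploit the identity $M^2=\sigma_{s^\ast}M\sigma_s$ (valid by commutativity of $S$): expanding $M=\sum_{t\in H_i}c_t\sigma_t$ with $c_t>0$ and $c_{t^\ast}=c_t$ and using that the scheme is commutative and $p$-valenced, I expect to show that the convolution $\sum_{t_1t_2=v}c_{t_1}c_{t_2}$ vanishes whenever $v\notin H_i$, yielding $H_iH_i\subseteq H_i$. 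Once this is done, $H_i$ coincides with the subgroup it generates, namely $\Or(S)$, so $\{s^\ast s\mid s\in S\}=\{\{1_X\},\Or(S)\}$ is a two-element chain and Theorem~\ref{cyclic-phz} gives schurity of $S$.
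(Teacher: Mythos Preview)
Your overall strategy---reduce to Theorem~\ref{cyclic-phz} by showing that $s^\ast s=\Or(S)$ for every $s\notin\Ot(S)$---is exactly the endpoint the paper reaches as well. The preliminary steps (that $\Or(S)\subseteq\Ot(S)$, that $s^\ast s$ depends only on the $\Ot(S)$-coset, and that $H_{i+j}\subseteq H_iH_j$, hence $\langle H_i\rangle=\Or(S)$) are all correct.

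The genuine gap is precisely where you flag it: your plan for the ``main obstacle'' does not work. The identity $M^2=\sigma_{s^\ast}M\sigma_s$ is an immediate consequence of commutativity and carries no information beyond the trivial expansion of $M^2$ itself. Concretely, for thin $t$ one has $\sigma_{s^\ast}\sigma_t\sigma_s=\sigma_{(t^{-1}s)^\ast}\sigma_{t^{-1}s}\,\sigma_t=M\sigma_t$, so $\sigma_{s^\ast}M\sigma_s=M\!\left(\sum_{t\in H_i}c_t\sigma_t\right)=M^2$ tautologically; the support computation only reproduces $H_iH_i$, and nothing forces the coefficients outside $H_i$ to vanish. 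So as written the argument is incomplete.

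The missing idea is much simpler than a convolution estimate, and it is essentially what the paper uses. Since $H_i=s^\ast s\subseteq\Or(S)\subseteq\Ot(S)$, every $t\in H_i$ is thin; for thin $t$ the condition $t\in s^\ast s$ is equivalent to $st=s$. Hence $H_i=R(s)\cap\Ot(S)$, which is automatically a subgroup of $\Ot(S)$. With $H_i$ a subgroup you get $H_i=\langle H_i\rangle=\Or(S)$ at once, and Theorem~\ref{cyclic-phz} applies. The paper reaches the same identification $s^\ast s=R(s)$ by a different route: it picks $s$ of \emph{minimal} valency $p^i$, uses a counting argument to force $|R(s)|=p^i=n_s$, and then shows directly that $\sigma_s\sigma_s=n_s\sigma_{s'}$ and iterates through the $p$ cosets. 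Your coset-product machinery avoids the minimality argument and the singleton-product claim, but it cannot be completed without the observation that $s^\ast s$ is already closed.
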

\begin{proof}
We take $s \in S \setminus \Ot(S)$ such that $\mathrm{min}\{ n_t \mid t \in S \setminus \Ot(S) \} = n_s$.
Then $n_s = p^i$ for some $1 \leq i \leq n-1$.
Note that $|R(s)| \leq p^i$ and $R(s) = L(s)$.

First of all, we show $|R(s)|=p^i$.
Suppose $|R(s)| \leq p^{i-1}$.
By Lemma \ref{lem:basic}(v), for each $t \in \Ot(S)$ $st$ is a relation with valency $p^i$.
Since $|\Ot(S)/\!/R(s)|\geq p^{n-i}$, we have $n_{s\Ot(S)} \geq p^i \cdot p^{n-i} = p^n$, a contradiction.

\vskip5pt
\textbf{Claim} : $\sigma_s \sigma_s = n_s \sigma_{s'}$ for some $s' \in S$ with $n_{s'}=p^i$.

Let $(\alpha, \beta) \in s' \in ss$ and $l_1  \in R(s)$.
Then there exists $\gamma_1 \in \alpha s \cap \beta s^\ast$.
Since $sl_1 =s$, for $(\alpha, \gamma_1) \in s$, we have $\gamma_2 \in \alpha s \cap \gamma_1 l_1^\ast$ for some $\gamma_2 \in \alpha s \setminus \{\gamma_1 \}$.
It follows from $l_1 s =s$ that $\beta \in \gamma_1 s = \gamma_2 l_1 s = \gamma_2 s$.
Thus, we have $\gamma_2 \in \alpha s \cap \beta s^\ast$.

Applying the same argument for each $l \in R(s) \setminus \{l_1\}$, we obtain $|\alpha s \cap \beta s^\ast|=p^i$.
This completes the proof of Claim.

\vskip5pt
By Lemma \ref{lem:basic}(iv) and Claim, we have $ss=s'$.
Since $S/\!/\Ot(S) \cong C_p$ and $s \not\in \Ot(S)$, we have $s \Ot(S) \cap s' \Ot(S) = \emptyset$.
Note that $R(s) = R(s')$.

\vskip5pt
By the argument used in the proof of Claim, we can show $\sigma_{s'} \sigma_s = p^i \sigma_{s''}$ for some $s'' \in S$ with $n_{s''}=p^i$.
By repeating this process, we have $S = \bigcup_{0 \leq j \leq p-1} s^j \Ot(S)$.
This implies $\Or(S) = s^\ast s$.
Since $\{ s^\ast s \mid s \in S \}$ is linearly ordered, it follows from Theorem \ref{cyclic-phz} that $S$ is schurian.
\end{proof}

\section{Proof of theorem \ref{thm:2-2-s} }\label{sec:main2}

We denote $\Or(S) \Ot(S)$ and $\Ot(S) \cap \Or(S)$ by $T$ and $H$, respectively.
We put
\[ I := \{ 0, 1, \dots, p^2-1 \}, I^\circ := \{ 0, p, 2p, \dots, (p-1)p \}, I^\times := I \setminus I^\circ,\]
\[J := \{ 0, 1, \dots, p-1 \} ~~\text{and}~~ J^\times := J \setminus \{ 0 \}.\]

Since $S/\!/\Or(S) \cong C_{p^2} ~\text{or}~ C_p \times C_p$, without loss of generality, we can assume
\[S = \bigcup_{i, j \in J}\Or(S)s_it_j\]
such that $s_1 \in S \setminus T$, $t_1 \in \Ot(S) \setminus H$, $\Or(S)s_1^i = \Or(S)s_i$ and
$\Or(S)t_1^i = \Or(S)t_i$ for each $i \in J$.
Note that $Ts_1^i = \bigcup_{j \in J} \Or(S)s_it_j$ for each $i \in J$.

\vskip5pt
From now on, we fix $\delta \in X$ and $r_0 \in \Or(S) \setminus H$.
We denote $\delta \Or(S)s_1^it_1^j$ by $X_{i+jp}$.

\begin{rem}
We have $T = \bigcup_{j \in J} \Or(S)t_1^j = \bigcup_{j \in J} \Ot(S)r_0^j$.
\end{rem}

For each $i \in J$, we consider the set of equivalence classes on $\bigcup_{j \in I^\circ}  X_{i+j}$ induced by $\bigcup_{t \in \Ot(S)} t \cap (\bigcup_{j \in I^\circ} X_{i+j} \times \bigcup_{j \in I^\circ} X_{i+j})$.
Denote it by $\{E_{i,j} \mid j \in J \}$.

For each $i \in J$, we take
\[\beta_{i,j} \in X_i ~~(j \in J)\]
such that
\[\beta_{i,j} \in E_{i,j}  ~\text{and}~  \beta_{i,j+1} \in \beta_{i,j} r_0.\]
The subindex $j$ of $\beta_{i,j}$ is reduced by modulo $p$.
Note that $\bigcup_{l \in I^\circ} X_{i+l} = \{ \beta_{i,j} t \mid j \in J, t \in \Ot(S) \}$.

\begin{rem}
$\{ X_i \cap E_{i,j} \mid j \in J \}$ is the set of equivalence classes on $X_i$ induced by $\bigcup_{h \in H} h \cap (X_i \times X_i)$.
\end{rem}

\vskip5pt
Let $X_A := \bigcup_{i \in I} \{ X_{(i,j)} \mid j \in J \}$ be a partition of $X$,
where $\{ X_{(i,j)} \mid j \in J \}$ is the set of equivalence classes on $X_i$ induced by $\bigcup_{h \in H} h \cap (X_i \times X_i)$.
The subindex $j$ of $X_{(i,j)}$ is reduced by modulo $p$.

\begin{rem}
For $k, l \in J$, we have $\beta_{k,j}t_1^l \in X_{(k + lp,j)}$.
\end{rem}

\subsection{One-point stabilizer of the automorphism group}
\hfill \break
In this subsection, we will show that for any $s \in S$, $\mathrm{Aut}(S)_\delta$ is transitive on $\delta s$.

\vskip10pt
For a fixed $h_0 \in H \setminus \{ 1_X \}$,
define $\phi : X \rightarrow X$ such that
\begin{enumerate}
\item for each $i \in I^\circ$
\[ \phi|_{X_{(i,0)}} \text{~is the identity map,} \]
\item for each $(i,j) \in (I^\circ \times J^\times) \bigcup (I^\times \times J)$
\[ \phi|_{X_{(i,j)}} : X_{(i,j)} \rightarrow  X_{(i,j)} \text{~by~} \alpha^\phi = \alpha h_0. \]
\end{enumerate}

\begin{prop}\label{prop:one-point-1}
$\langle \phi \rangle$ is a nontrivial subgroup of $\mathrm{Aut}(S)_\delta$ such that
$\langle \phi \rangle$ is transitive on $X_{(i,j)}$ for each $(i, j) \in (I \times J) \setminus (I^\circ \times \{ 0\})$.
\end{prop}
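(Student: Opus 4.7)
The plan is to verify in order that (i) $\phi$ is a well-defined bijection of $X$, (ii) $\phi$ fixes $\delta$, (iii) $\phi$ preserves every relation in $S$, and (iv) $\langle\phi\rangle$ is nontrivial and transitive on each $X_{(i,j)}$ with $(i,j) \notin I^\circ \times \{0\}$. Steps (i), (ii) and (iv) are quick bookkeeping: each $X_{(i,j)}$ is an $H$-orbit of size $p$, so both the identity and right-multiplication by $h_0$ are bijections $X_{(i,j)} \to X_{(i,j)}$, making $\phi$ well-defined; since $\delta \in X_{(0,0)}$ with $0 \in I^\circ$, $\phi(\delta) = \delta$; and on each moved block $\phi$ restricts to right-multiplication by $h_0$, an element of order $p$ acting freely on $p$ points, so $\langle\phi\rangle$ acts there as a cyclic group of order $p$ transitively, with nontriviality immediate from $h_0 \neq 1_X$.

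The substantive step is (iii). Because $S$ is commutative and $h_0 \in \Ot(S)$ is thin, plain right-translation $\rho_{h_0}: \alpha \mapsto \alpha h_0$ is itself in $\mathrm{Aut}(S)$: commutativity gives $r(\alpha h_0, \beta h_0) \in h_0^\ast r(\alpha,\beta) h_0 = r(\alpha,\beta)(h_0^\ast h_0) = r(\alpha,\beta)$. Writing $\phi(\alpha) = \alpha h_0^{\varepsilon(\alpha)}$ with $\varepsilon = 0$ on $F := \bigcup_{i \in I^\circ} X_{(i,0)}$ and $\varepsilon = 1$ elsewhere, the same computation together with the thinness of $h_0$ yields the single-relation identity $r(\phi(x),\phi(y)) = r(x,y) \cdot h_0^{\varepsilon(y)-\varepsilon(x)}$. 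The two homogeneous cases $\varepsilon(x) = \varepsilon(y)$ are automatic. In the mixed case one observes $F = \delta\Ot(S)$ (since $X_{(jp,0)} = \delta t_j H$ and $\Ot(S) = \bigsqcup_{j \in J} t_j H$), so that $r(x,y) \in S \setminus \Ot(S)$, and compatibility reduces to the claim that $H \subseteq R(s)$ for every $s \in S \setminus \Ot(S)$.

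I would prove this claim in stages. For $s \in \Or(S) \setminus H$ (where $n_s = p$ by a valency count in $\Or(S)$): the quotient $\Or(S)\Qd H$ has order $p$ and is therefore the thin $C_p$, and the quotient-valency formula $|sH| \cdot n_s / n_H = 1$ forces $|sH| = 1$, i.e., $H \subseteq R(s)$. For $s \in S \setminus T$ (where $n_s = p^2$ by hypothesis): the stabilizer identity $|R(s)| \cdot |s\Ot(S)| = n_{\Ot(S)} = p^2$, combined with the containment $s\Ot(S) \subseteq S \setminus T$ and the count $|S \setminus T| = p(p-1)$ coming from summing valencies, yields $|s\Ot(S)| \leq p$ and hence $|R(s)| \geq p$. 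The case $s \in T \setminus \Or(S)$ is handled by an analogous quotient analysis inside $T \Qd H$. The main obstacle, which I expect to require the full strength of the hypotheses together with commutativity, is the identification $R(s) \supseteq H$ (as opposed to some other order-$p$ subgroup of $\Ot(S)$) in the case $s \in S \setminus T$; this calls for a careful use of the way $\Or(S)$ intersects $\Ot(S)$ inside $S$. Once the claim is established, mixed-case compatibility follows, and $\phi \in \mathrm{Aut}(S)_\delta$, completing the proof.
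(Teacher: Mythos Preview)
Your reduction is the same one the paper uses: compatibility is checked on block pairs $X_{(i,j)}\times X_{(i',j')}$, and the mixed case comes down to the claim that every $s\in S\setminus\Ot(S)$ satisfies $sH=s$. The paper phrases this as ``$X_{(i,j)}\times X_{(i',j')}\subseteq s$ for a single $s$'' and simply asserts it in each case; you make the claim explicit but do not close it, and your partial argument for $s\in S\setminus T$ contains an error.

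The error: from $s\Ot(S)\subseteq S\setminus T$ and $|S\setminus T|=p(p-1)$ you cannot deduce $|s\Ot(S)|\le p$, since $p(p-1)>p$ for odd $p$. The correct bound uses that $T$ is strongly normal (as $n_{S}/n_T=p$), so each $T$-double-coset outside $T$ has order $n_T=p^3$; since every element of $S\setminus T$ has valency $p^2$ by hypothesis, one gets $|TsT|=p$, hence $|s\Ot(S)|\le p$ and $|R(s)|\ge p$.

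The ``main obstacle'' you flag---pinning $R(s)$ down to $H$ rather than some other order-$p$ subgroup of $\Ot(S)$---has a one-line resolution and needs nothing beyond what is already on the table: $R(s)=s^{\ast}s\cap\Ot(S)\subseteq\Or(S)\cap\Ot(S)=H$, because $s^{\ast}s\subseteq\Or(S)$ by the very definition of the thin residue. Together with $|R(s)|\ge p=|H|$ this gives $R(s)=H$, and your mixed-case identity $r(\phi(x),\phi(y))=r(x,y)\,h_0^{\pm 1}=r(x,y)$ follows. (Equivalently and even more directly: $S\Qd\Or(S)$ is thin, so $n_{\Or(S)s\Or(S)}=p^2=n_s$ forces $\Or(S)s\Or(S)=\{s\}$, whence $sH=s$.) With this gap filled your argument is complete and coincides with the paper's.
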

\begin{proof}
It follows from the definition of $\phi$ that
$\langle \phi \rangle$ is transitive on $X_{(i,j)}$ for each $(i, j) \in (I \times J) \setminus (I^\circ \times \{ 0\})$.
It suffices to verify $X_{(i,j)} \sim_\phi X_{(i',j')}$ for all $(i, j), (i',j') \in I \times J$.

\vskip5pt
For $(i, j), (i',j') \in I^\circ \times \{0\}$,
we clearly have $X_{(i,j)} \sim_\phi X_{(i',j')}$ since $\phi|_{X_{(i,j)}}$ and $\phi|_{X_{(i',j')}}$ are the identity maps.

\vskip5pt
For $(i, j), (i',j') \in (I \times J) \setminus (I^\circ \times \{ 0\})$,
we have
\begin{eqnarray*}
r(x^\phi, y^\phi) & = &  r(xh_0, yh_0) = r(xh_0, x) r(x, y) r(y, yh_0) \\
                  & = &  h_0^\ast r(x,y) h_0 = r(x,y)
\end{eqnarray*}
for each $(x,y) \in X_{(i,j)} \times X_{(i,j)}$.

\vskip5pt
For $(i, j) \in I^\circ \times \{0\}$ and $(i',j') \in (I \times J) \setminus (I^\circ \times \{ 0\})$,
we divide our consideration into three cases.

(Case 1) $i=i'$ and $j \neq j'$ : we have $X_{(i,j)} \sim_\phi X_{(i',j')}$
since $X_{(i,j)} \times X_{(i',j')} \subseteq t$ for some $t \in \Or(S) \setminus H$.

\vskip5pt
(Case 2) $i \not\equiv i' (\mathrm{mod} ~p)$ :
Then either $j=j'$ or $j\neq j'$.
Whichever the case may be, we have $X_{(i,j)} \times X_{(i',j')} \subseteq s$ for some $s \in S \setminus T$.
So, $X_{(i,j)} \sim_\phi X_{(i',j')}$.

\vskip5pt
(Case 3) $i\neq i' $, $i \equiv i' (\mathrm{mod} ~p)$ and $j \neq j'$ :
we have $X_{(i,j)} \sim_\phi X_{(i',j')}$
since $X_{(i,j)} \times X_{(i',j')} \subseteq t$ for some $t \in T \setminus \Ot(S)$.


\end{proof}

Define $\psi : X \rightarrow X$ such that
\begin{enumerate}
\item for each $i \in I^\circ$ and $j \in J$
\[ \psi|_{X_{(i,j)}} = \phi|_{X_{(i,j)}}, \]
\item for each $i \in J^\times$ and $l, j \in J$
\[ \psi|_{X_{(i+lp,j)}} : X_{(i+lp,j)} \rightarrow  X_{(i+lp,j+1)} \text{~by~} (\beta_{i,j} t_1^{l}h)^\psi = \beta_{i,j+1} t_1^{l}h ~~(h \in H).\]
\end{enumerate}

\begin{prop}\label{prop:one-point-2}
$\langle \psi \rangle$ is a nontrivial subgroup of $\mathrm{Aut}(S)_\delta$ such that
for each $(l,i) \in I^\circ \times J^\times$, $\langle \psi \rangle$ is transitive on $\{ X_{(i+l,j)} \mid j \in J \}$.
\end{prop}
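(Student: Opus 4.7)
The plan is to verify that $\psi$ is a well-defined bijection of $X$ fixing $\delta$ and that it is a faithful map in the sense of Remark \ref{rem:transitive}; the latter will yield $\psi \in \mathrm{Aut}(S)_\delta$. The transitivity of $\langle\psi\rangle$ on $\{X_{(i+l, j)} \mid j \in J\}$ for each $(l,i) \in I^\circ \times J^\times$ is then immediate from the defining action, which cyclically shifts the index $j$ modulo $p$, and nontriviality is witnessed by $\beta_{i, 0}^\psi = \beta_{i, 1}$ for $i \in J^\times$.

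First I would check well-definedness and bijectivity. Every point of $X_{(i+lp, j)}$ admits a unique expression $\beta_{i, j}\, t_1^l\, h$ with $h \in H$, because $\{t_1^l h \mid h \in H\}$ consists of $|H| = p$ distinct thin relations applied to $\beta_{i, j}$ while $|X_{(i+lp, j)}| = p$. Hence $\beta_{i, j} t_1^l h \mapsto \beta_{i, j+1} t_1^l h$ is a bijection $X_{(i+lp, j)} \to X_{(i+lp, j+1)}$, and together with the action of $\phi$ on the $I^\circ$-rows this yields a bijection $\psi$ of $X$. Since $\delta \in X_{(0,0)}$ with $0 \in I^\circ$ and $\phi$ is the identity there, $\delta^\psi = \delta$.

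The core task is faithfulness, i.e., $X_{(i_1, j_1)} \sim_\psi X_{(i_2, j_2)}$ for every pair of blocks. I would organize the verification by the residues of $i_1, i_2$ modulo $p$. When both lie in $I^\circ$, $\psi$ restricts to $\phi$ on both blocks, so Proposition \ref{prop:one-point-1} applies verbatim. When at least one index comes from a $J^\times$-row, I would expand the representatives as $\beta_{i_k, j_k} t_1^{l_k} h_k$ and use $r(ah, b) = h^\ast r(a, b)$ and $r(a, bh) = r(a, b) h$ for thin $h \in H$, together with commutativity of $S$, to reduce the desired identity to one among the $\beta$-vertices, typically $r(\beta_{i_1, j_1+1}, \beta_{i_2, j_2}) = r(\beta_{i_1, j_1}, \beta_{i_2, j_2})$ (one-sided shift) or the symmetric two-sided version.

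The main obstacle is the stabilizer identity $\Or(S) \subseteq R(s)$ for every $s \in S \setminus T$, which absorbs the $r_0$-shift in $\psi$ whenever $x, y$ lie in different $T$-cosets. I would deduce it by fiber-counting in the thin quotient $S/\!/\Or(S)$: for $(x\Or(S), y\Or(S)) \in \bar{s}$ and any $x' \in x\Or(S)$, valency $1$ in the quotient forces $x' s \subseteq y\Or(S)$, and $n_s = p^2 = n_{\Or(S)} = |y\Or(S)|$ promotes the inclusion to an equality; the fiber-valency identity $\sum_{u \in \bar{s}^{-1}} n_u = n_{\Or(S)}$ then collapses to $\bar{s}^{-1} = \{s\}$, so $\Or(S) s = \{s\}$, and commutativity gives $L(s) = R(s) \supseteq \Or(S)$, in particular $r_0 s = s r_0 = s$. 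Completely analogous fiber-counting inside $\Or(S)$ against its thin quotient $\Or(S)/\!/H \cong C_p$ yields $H \subseteq R(t)$ for each $t \in \Or(S) \setminus H$, and the corresponding argument within $T$ handles the non-thin elements of $T \setminus \Ot(S)$; combined with the wreath-like structure of $\Or(S)$ (which makes a two-sided $r_0$-shift act as a translation on relations within a single $\Or(S)$-coset), these reductions dispose of the remaining subcases mechanically and complete the faithfulness of $\psi$.
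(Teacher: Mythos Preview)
Your proposal is correct and follows essentially the same route as the paper. Both arguments reduce to Proposition~\ref{prop:one-point-1} on the $I^\circ$-rows, use the block fact that cross-$T$-coset pairs lie in a single $s\in S\setminus T$ (which is exactly your stabilizer identity $\Or(S)\subseteq R(s)=L(s)$, since $n_s=p^2=n_{\Or(S)}$ forces $xs$ to be a full $\Or(S)$-class), and then compute explicitly on the $\beta$-representatives for the remaining case $i\equiv i'\pmod p$ with $i,i'\in I^\times$, invoking the $C_p\wr C_p$ structure of $\Or(S)$ to get $r_0^\ast r\,r_0=\{r\}$ for the two-sided shift.

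One small presentational point: the ``one-sided shift'' you mention never actually arises. Whenever exactly one index lies in a $J^\times$-row, the other lies in an $I^\circ$-row, where $\psi$ acts as $\phi$ (multiplication by $h_0$ or the identity), not as a fixed point; but since the two points then sit in different $T$-cosets, your stabilizer fact $\Or(S)s=s\Or(S)=s$ already disposes of this case without any shift bookkeeping. The paper phrases this as ``$X_{(i,j)}\times X_{i'}\subseteq s$'' and moves on immediately. Your final paragraph is thus a bit overloaded---the $H\subseteq R(t)$ and $T\setminus\Ot(S)$ remarks are not needed here---but the substantive content matches the paper's proof.
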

\begin{proof}
First of all, we prove that every pair of subsets in $\{ X_{(i,j)} \mid (i,j) \in I^\circ \times J \} \bigcup \{ X_i \mid i \in I^\times \}$
is compatible with respect to $\psi$.

\vskip5pt
Since $\psi|_{X_{(i,j)}} = \phi|_{X_{(i,j)}}$ for each $(i,j) \in I^\circ \times J$,
it follows from the proof of Proposition \ref{prop:one-point-1} that $X_{(i,j)} \sim_\psi X_{(i',j')}$ for $(i,j), (i',j') \in I^\circ \times J$.

\vskip5pt
For $(i,j) \in I^\circ \times J$ and $i' \in I^\times$, we have $X_{(i,j)} \sim_\psi X_{i'}$ since $X_{(i,j)} \times X_{i'} \subseteq s$
for some $s \in S \setminus T$.

\vskip5pt
For $i, i' \in I^\times$ $(i \not\equiv i' (\mathrm{mod} ~p))$, we have $X_i \sim_\psi X_{i'}$ since $X_i \times X_{i'} \subseteq s$
for some $s \in S \setminus T$.

\vskip5pt
For $i, i' \in I^\times$ $(i \equiv i' (\mathrm{mod} ~p))$,
let us put $i= i_1+ l$ and $i'= i_1+ l'$ $(i_1 \in J ~\text{and}~ l, l' \in I^\circ)$, we show $X_{i_1+l} \sim_\psi X_{i_1+l'}$.
Let $(x,y) \in X_{i_1 +l} \times X_{i_1 +l'}$.
Then $x= \beta_{i_1,j}t_1^lh_1$ and $y= \beta_{i_1,j'}t_1^{l'}h_2$ for some $j, j' \in J^\times$ and $h_1, h_2 \in H$.
We have
\begin{eqnarray*}
r(x, y)  & = & r(\beta_{i_1,j}t_1^lh_1, \beta_{i_1,j}) r(\beta_{i_1,j}, \beta_{i_1,j'}) r(\beta_{i_1,j'}, \beta_{i_1,j'}t_1^{l'}h_2)  \\
         & = & (t_1^lh_1)^\ast r(\beta_{i_1,j}, \beta_{i_1,j'}) t_1^{l'}h_2
\end{eqnarray*}
and
\begin{eqnarray*}
r(x^\psi, y^\psi) & = & r(\beta_{i_1,j+1}t_1^lh_1, \beta_{i_1,j+1}) r(\beta_{i_1,j+1}, \beta_{i_1,j'+1}) r(\beta_{i_1,j'+1}, \beta_{i_1,j'+1}t_1^{l'}h_2) \\
                  & = & (t_1^lh_1)^\ast r(\beta_{i_1,j+1}, \beta_{i_1,j'+1}) t_1^{l'}h_2.
\end{eqnarray*}

(Case 1) $j = j'$ : we have $r(x^\psi, y^\psi) = h_1^\ast (t_1^l)^\ast t_1^{l'} h_2 = r(x, y)$.

\vskip5pt
(Case 2) $j \neq j'$ : we have $r(x^\psi, y^\psi) = h_1^\ast (t_1^l)^\ast r_0^m t_1^{l'} h_2 = r(x, y)$ for some $m \in J^\times$.

\vskip5pt
Finally, it follows from the definition of $\psi$ that $\langle \psi \rangle$ is transitive on $\{ X_{(i+l,j)} \mid j \in J \}$.
\end{proof}

It follows from Propositions \ref{prop:one-point-1} and \ref{prop:one-point-2} that
for each $s \in S$, $\langle \phi, \psi \rangle$ is transitive on $\delta s$.

\subsection{Transitivity of the automorphism group}
\hfill \break
In this subsection, for all $\alpha, \beta \in X$ we will construct a faithful map $\phi : X \rightarrow X$
such that $\alpha^\phi = \beta$.
We divide our consideration into four cases :
\begin{enumerate}
\item[(I)] $\alpha H= \beta H$,
\item[(II)] $\alpha H \neq \beta H$ and $\alpha \Or(S)= \beta \Or(S)$,
\item[(III)] $\alpha \Or(S) \neq \beta \Or(S)$ and $\alpha T = \beta T$,
\item[(IV)] $\alpha T \neq \beta T$.
\end{enumerate}

In the case $\mathrm{(I)}$, without loss of generality, we assume $\alpha, \beta \in X_{(0,j)}$ for some $j$.

Put $\alpha_0=\alpha$ and $\beta_0=\beta$, and take $\alpha_i, \beta_i \in X$ such that
$\alpha_{i+1} = \alpha_i t_1$ and $\beta_{i+1} = \beta_i t_1$ for each $i \in J \setminus \{ p-1 \}$.

Define $\phi : X \rightarrow X$ such that
\begin{enumerate}
\item for each $l \in I^\circ$
\[\phi|_{X_{(l,j)}} : X_{(l,j)} \rightarrow  X_{(l,j)} \text{~by~} \alpha_{\frac{l}{p}} h \mapsto \beta_{\frac{l}{p}} h ~~(h \in H),\]
\item \[\phi|_{X \setminus \bigcup_{l \in I^\circ} X_{(l,j)}} \text{~is the identity map}.\]
\end{enumerate}

\begin{prop}\label{prop:tr1}
For $\alpha, \beta \in X$ $(\alpha H= \beta H)$, $\phi : X \rightarrow X$ is a faithful map
such that $\alpha^\phi = \beta$.
\end{prop}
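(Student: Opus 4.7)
The equation $\alpha^\phi = \beta$ is immediate from the definition of $\phi$ on taking $l = 0$ and $h = 1_X$. To set up the faithfulness check, I would first recast $\phi$ in closed form. Since $\alpha H = \beta H$, there is a unique $h_0 \in H$ with $\beta = \alpha h_0$, and commutativity of $S$ forces $H$ to commute with $t_1$, so $\beta_i = \alpha_i h_0$ for every $i$. Consequently $\phi$ acts on $A := \bigcup_{l \in I^\circ} X_{(l, j)}$ as right-multiplication by $h_0$, and is the identity on $X \setminus A$; thus $\phi$ is a bijection, and $A$ coincides with the single $\Ot(S)$-orbit $\delta r_0^j \Ot(S)$.

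Next I would verify $r(x^\phi, y^\phi) = r(x, y)$ for every $(x, y) \in X \times X$ by splitting into three cases according to whether $x, y$ lie in $A$. If $x, y \in A$, commutativity of $S$ gives
\[ r(xh_0, yh_0) = h_0^\ast \, r(x, y) \, h_0 = r(x, y), \]
since $h_0$ is thin and central. If $x, y \in X \setminus A$ the equality is trivial. The substantive case is the mixed one: say $x \in A$, $y \in X \setminus A$, in which case $r(x^\phi, y^\phi) = r(xh_0, y) = h_0^\ast \, r(x, y)$, so the required equality reduces to $h_0 \in L(r(x, y))$, i.e., $H \subseteq L(r(x, y))$.

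The main obstacle is this last structural claim. I would split further by whether $y \in \delta T$: if not, then $r(x, y) \in S \setminus T$ has valency $p^2$ by hypothesis, while if so, then $r(x, y) \in T \setminus \Ot(S)$. For $s \in S \setminus T$, the set $T = \Or(S)\Ot(S)$ is strongly normal in $S$ (a short consequence of $\Or(S) \lhd^\sharp S$ and commutativity, via $s^\ast T s = s^\ast \Or(S) s \Ot(S) \subseteq T$), so $S /\!/ T \cong C_p$ and each non-identity $T$-coset has order $p^3$; combined with $n_s = p^2$, each such coset has exactly $p$ relations, and the inclusion $s\Ot(S) \subseteq sT$ yields $n_{s\Ot(S)} = p^4/|L(s)| \leq p^3$, forcing $|L(s)| \geq p$. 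The identification $H \subseteq L(s)$ then follows by combining $s^\ast s \subseteq \Or(S)$ with the facts that $L(s)$ is a closed subgroup of $\Ot(S)$ and that $H = \Ot(S) \cap \Or(S)$. For $s \in T \setminus \Ot(S)$, I would run the analogous argument inside the subscheme $T_{\delta T}$, whose thin radical is $\Ot(S)$ and in which $H$ plays the same role.

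The hardest piece of the execution is pinning down $L(s) \supseteq H$ when $\Ot(S) \cong C_p \times C_p$, since $\Ot(S)$ then has multiple subgroups of order $p$ and $H$ must be singled out using the $\Or(S)$-structure of $s^\ast s$ rather than subgroup counting alone.
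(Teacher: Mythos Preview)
Your approach is essentially the paper's: both reduce the mixed case to the structural fact that $H\subseteq L(s)$ (equivalently, that each product $X_{(l,j)}\times X_{(i,k)}$ with $(i,k)\notin X_B$ lies in a single relation), and the ``both in $A$'' case is the same conjugation-by-$h_0$ computation. Your recasting of $\phi$ as right-multiplication by the unique $h_0\in H$ with $\beta=\alpha h_0$ is a clean simplification; the paper does the equivalent block-by-block check without naming $h_0$. Where the paper simply asserts the single-relation containment, you actually argue for $H\subseteq L(s)$, which is a genuine addition.

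Two points to tighten. First, $L(s)$ need not lie inside $\Ot(S)$ (in fact $L(s)=\Or(S)$ for $s\in S\setminus T$, since $\Or(S)s=\{s\}$); what you really use is $L(s)\cap\Ot(S)$, and your formula $n_{s\Ot(S)}=p^4/|L(s)|$ should read $p^4/|L(s)\cap\Ot(S)|$. With that fix, your counting plus $L(s)\cap\Ot(S)\subseteq ss^\ast\cap\Ot(S)\subseteq\Or(S)\cap\Ot(S)=H$ gives $L(s)\cap\Ot(S)=H$ cleanly. Second, for $s\in T\setminus\Ot(S)$ the literal analogue of your $T$-coset bound gives nothing (the relevant coset is all of $T$). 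The quickest route is the one you hint at in your last paragraph: since $\Or(S)$ is a non-thin $p$-scheme of order $p^2$ with thin radical $H$, it is $C_p\wr C_p$, so $rr^\ast=H$ for every $r\in\Or(S)\setminus H$; writing $s=rt$ with $t\in\Ot(S)$ then gives $ss^\ast=rr^\ast=H$ and hence $H\subseteq L(s)$. Equivalently, $\Or(T)=H$ forces $ss^\ast\subseteq H$, and a valency count shows $ss^\ast=H$. Once this is in place, your proof goes through and matches the paper's.
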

\begin{proof}
Set $X_B = \{ X_{(l,j)} \mid l \in I^\circ \}$.
We divide $X_A$ into two parts, i.e., $X_B$ and $X_A \setminus X_B$.

\vskip5pt
For $X_{(i,j)},  X_{(i',j')} \in X_A \setminus X_B$, we clearly have $X_{(i,j)} \sim_\phi X_{(i',j')}$
since $\phi|_{X \setminus \bigcup_{l \in I^\circ} X_{(l,j)}}$ is the identity map.

\vskip5pt
For $X_{(l,j)} \in X_B$ and $X_{(i,k)} \in X_A \setminus X_B$,

(Case 1) $i \in I^\circ$ : we have $X_{(l,j)} \sim_\phi X_{(i,k)}$ since $X_{(l,j)} \times X_{(i,k)} \subseteq s$ for some $s \in T \setminus \Ot(S)$.

\vskip5pt
(Case 2) $i \in I^\times$ : we have $X_{(l,j)} \sim_\phi X_{(i,k)}$ since $X_{(l,j)} \times X_{(i,k)} \subseteq s$ for some $s \in S \setminus T$.

\vskip5pt
For $X_{(l,j)},  X_{(l',j)} \in X_B$, let us take $(x,y) \in X_{(l,j)} \times  X_{(l',j)}$.
Then $x = \alpha_{\frac{l}{p}}h_1$ and $y = \alpha_{\frac{l'}{p}}h_2$ for some $h_1, h_2 \in H$.
We have
\[r(x,y) = r(\alpha_{\frac{l}{p}}h_1, \alpha_{\frac{l}{p}}) r(\alpha_{\frac{l}{p}}, \alpha_{\frac{l'}{p}}) r(\alpha_{\frac{l'}{p}}, \alpha_{\frac{l'}{p}}h_2)\]
and
\[r(x^\phi, y^\phi) = r(\beta_{\frac{l}{p}}h_1, \beta_{\frac{l}{p}}) r(\beta_{\frac{l}{p}}, \beta_{\frac{l'}{p}}) r(\beta_{\frac{l'}{p}}, \beta_{\frac{l'}{p}}h_2).\]

(Case 1) $l = l'$ : we have $r(x^\phi, y^\phi) = h_1^\ast h_2 = r(x,y)$.

\vskip5pt
(Case 2) $l \neq l'$ : we have $r(x^\phi, y^\phi) = h_1^\ast t_1^m h_2 = r(x,y)$ for some $m \in J^\times$.
\end{proof}

\vskip10pt
In the case $\mathrm{(II)}$, without loss of generality, we assume $\alpha, \beta \in X_{0}$ such that $\alpha \in X_{(0,0)}$ and $\beta \in X_{(0,1)}$.
By Proposition \ref{prop:tr1}, we can assume $\alpha = \beta_{0,0}$ and $\beta = \beta_{0,1}$.

Define $\phi : X \rightarrow X$ such that
\begin{enumerate}
\item for each $i, j \in J$
\[ \phi|_{X_{(ip,j)}} : X_{(ip,j)} \rightarrow  X_{(ip,j+1)} \text{~by~} (\beta_{0,j} t_1^i h)^\phi = \beta_{0,j+1} t_1^i h ~~(h \in H),\]
\item \[\phi|_{\bigcup_{i \in I^\times} X_i} \text{~is the identity map}.\]
\end{enumerate}

\begin{prop}\label{prop:tr2}
For $\alpha, \beta \in X$ $(\alpha H \neq \beta H ~\text{and}~ \alpha \Or(S)= \beta \Or(S))$, $\phi : X \rightarrow X$ is a faithful map
such that $\alpha^\phi = \beta$.
\end{prop}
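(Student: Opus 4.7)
The plan is to mirror the strategy of Proposition~\ref{prop:tr1}: partition $X$ into the block $\delta T=\bigcup_{i\in I^\circ}X_i$, on which $\phi$ acts non-trivially, and its complement $\bigcup_{i\in I^\times}X_i$, on which $\phi$ is the identity; then verify compatibility $C\sim_\phi D$ for every pair in the finer partition $\{X_{(ip,j)}:i,j\in J\}\cup\{X_i:i\in I^\times\}$. Once this is established, the required equality $\alpha^\phi=\beta$ is immediate from the definition of $\phi$, since $\alpha=\beta_{0,0}\in X_{(0,0)}$ and $\beta=\beta_{0,1}\in X_{(0,1)}$.

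I would split the compatibility check into three cases. In Case~(a), both pieces lie in $\bigcup_{i\in I^\times}X_i$ and $\phi$ fixes them pointwise, so nothing to check. In Case~(b), one piece is some $X_{(ip,j)}\subseteq\delta T$ and the other lies in the complement; writing $x=\beta_{0,j}t_1^ih$, the chain rule yields $r(x,y)=(t_1^ih)^\ast r(\beta_{0,j},y)$ and the analogous expression for $\phi(x)=\beta_{0,j+1}t_1^ih$, so everything reduces to the identity $r(\beta_{0,j},y)=r(\beta_{0,j+1},y)$. Since $\beta_{0,j+1}\in\beta_{0,j}r_0$, this follows once one shows $r_0\in L(r(\beta_{0,j},y))$. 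In Case~(c), both pieces lie in $\delta T$; the chain rule reduces matters to $r(\beta_{0,j},\beta_{0,j'})=r(\beta_{0,j+1},\beta_{0,j'+1})$, which is immediate when $j=j'$ and must be argued carefully for $j\neq j'$.

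The substantive step for Case~(b) is the key lemma: for every $s\in S\setminus T$ with $n_s=p^2$ one has $\Or(S)\subseteq L(s)$. I would prove it as follows. Since $S/\!/\Or(S)$ is the thin group $C_{p^2}$ or $C_p\times C_p$, the image $s^{\Or(S)}$ has valency one, so for each $x\in X$ the set $xs$ lies in a single $\Or(S)$-coset $y\Or(S)$; and since $|xs|=p^2=n_{\Or(S)}$, equality holds. The same reasoning applied to any $x'\in x\Or(S)$ yields $x's=y\Or(S)$, so every $z\in xr$ with $r\in\Or(S)$ satisfies $(z,y)\in s$. Hence $a_{rss}=n_r$, giving $rs=\{s\}$ and $r\in L(s)$.

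The main obstacle is the subcase $j\neq j'$ of Case~(c), which requires controlling $r_0^\ast r r_0$ for $r=r(\beta_{0,j},\beta_{0,j'})\in\Or(S)\setminus H$, despite $r_0$ having valency~$p$. My plan here is, first, to observe that $\Or(S)/\!/H\cong C_p$ is thin and each non-identity $H$-fibre in $\Or(S)$ has total valency $p$; since such a fibre contains no thin element, it must consist of a single scheme element of valency $p$, so $Hr=\{r\}$ for every $r\in\Or(S)\setminus H$. Second, $r_0^\ast r_0$ projects to the identity of $\Or(S)/\!/H$, so $r_0^\ast r_0\subseteq H$. Combining these with commutativity yields $r_0^\ast r r_0 = r_0^\ast r_0 r \subseteq Hr = \{r\}$, which forces $r(\beta_{0,j+1},\beta_{0,j'+1})=r$ and completes the argument.
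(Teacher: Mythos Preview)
Your proposal is correct and follows essentially the same three-case decomposition as the paper's proof. The only difference is that you supply explicit arguments for two facts the paper states without justification: in Case~(b) you prove $\Or(S)\subseteq L(s)$ for $s\in S\setminus T$ (which is exactly what underlies the paper's assertion that $X_i\times X_{i'}\subseteq s$), and in Case~(c) your computation $r_0^\ast r r_0\subseteq Hr=\{r\}$ is a clean justification of the equality $r(\beta_{0,j},\beta_{0,j'})=r(\beta_{0,j+1},\beta_{0,j'+1})$ that the paper simply asserts.
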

\begin{proof}

For $i, i' \in I^\times$, we clearly have $X_i\sim_\phi X_{i'}$
since $\phi|_{\bigcup_{i \in I^\times} X_i}$ is the identity map.

\vskip5pt
For $i \in I^\circ$ and $i' \in I^\times$, we have $X_i\sim_\phi X_{i'}$ since $X_i \times X_{i'} \subseteq s$ for some $s \in S \setminus T$.

\vskip5pt
For $ip, i'p \in I^\circ$, let us take $(x,y) \in X_{ip} \times X_{i'p}$.
Then $x = \beta_{0,j} t_1^ih_1$ and $y = \beta_{0,j'} t_1^{i'}h_2$ for some $j, j' \in J$ and $h_1, h_2 \in H$.
We have
\[r(x,y) = r(\beta_{0,j} t_1^ih_1, \beta_{0,j}) r(\beta_{0,j}, \beta_{0,j'}) r(\beta_{0,j'},\beta_{0,j'} t_1^{i'}h_2)\]
and
\[r(x^\phi, y^\phi) = r(\beta_{0,j+1} t_1^ih_1, \beta_{0,j+1}) r(\beta_{0,j+1}, \beta_{0,j'+1}) r(\beta_{0,j'+1},\beta_{0,j'+1} t_1^{i'}h_2).\]
Since $r(\beta_{0,j}, \beta_{0,j'}) = r(\beta_{0,j+1}, \beta_{0,j'+1})$, we have $r(x^\phi, y^\phi) = r(x,y)$.
\end{proof}

\vskip10pt
In the case $\mathrm{(III)}$, without loss of generality, we assume $\alpha \in X_0, \beta \in X_p$.
By Propositions \ref{prop:tr1} and \ref{prop:tr2}, we can assume $\alpha = \beta_{0,0}$ and $\beta = \beta_{0,0}t_1$.

Define $\phi : X \rightarrow X$ such that
for each $i, j \in J$
\[ \phi|_{\bigcup_{l \in I^\circ} X_{(i+l,j)}} : \bigcup_{l \in I^\circ}X_{(i+l,j)} \rightarrow  \bigcup_{l \in I^\circ}X_{(i+l,j)} \text{~by~} (\beta_{i,j} t)^\phi = \beta_{i,j}t_1t ~~(t \in \Ot(S)).\]

\begin{prop}\label{prop:tr3}
For $\alpha, \beta \in X$ $(\alpha \Or(S) \neq \beta \Or(S) ~\text{and}~ \alpha T= \beta T)$, $\phi : X \rightarrow X$ is a faithful map
such that $\alpha^\phi = \beta$.
\end{prop}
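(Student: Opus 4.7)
The plan is to verify that $\phi$ is well-defined, that $\alpha^\phi=\beta$, and that $\phi$ is faithful by direct computation, leveraging the thinness of $\Ot(S)$ and the commutativity of $S$.

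First I would observe that the blocks $\bigcup_{l\in I^\circ} X_{(i+l,j)}$, as $(i,j)$ ranges over $J\times J$, coincide with the $\Ot(S)$-orbits $\beta_{i,j}\Ot(S)$ and together partition $X$: the relation $\beta_{k,j}t_1^l\in X_{(k+lp,j)}$ noted earlier gives $\beta_{i,j}\Ot(S)=\bigcup_{l\in J}\beta_{i,j}t_1^lH=\bigcup_{l\in I^\circ}X_{(i+l,j)}$. Since right-multiplication by $t_1\in\Ot(S)$ permutes $\Ot(S)$, the restriction of $\phi$ to each such block is a bijection, so $\phi$ is a permutation of $X$. The equality $\alpha^\phi=\beta_{0,0}t_1=\beta$ is then immediate from the reduction made before the definition of $\phi$.

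The core of the proof is the \textbf{thin-shift identity}: for any $\alpha,y\in X$ and any $t\in\Ot(S)$,
\[ r(\alpha t, y) \;=\; t^\ast\, r(\alpha,y), \qquad r(y,\alpha t) \;=\; r(y,\alpha)\, t, \]
where the right-hand sides are single relations because $n_t=1$ forces $|t^\ast u|=|u\,t|=1$ for every $u\in S$ by Lemma \ref{lem:basic}(vi). This identity follows from the triangle observation $r(\alpha,y)\in r(\alpha,\alpha t)\cdot r(\alpha t,y)=t\cdot r(\alpha t,y)$ combined with the singleton property. With this in hand, any $x,y\in X$ decompose as $x=\beta_{i,j}t_x$ and $y=\beta_{i',j'}t_y$ with $t_x,t_y\in\Ot(S)$, and two applications of the identity yield
\[ r(x,y) = t_x^\ast\, r(\beta_{i,j},\beta_{i',j'})\, t_y, \qquad r(x^\phi,y^\phi) = (t_1t_x)^\ast\, r(\beta_{i,j},\beta_{i',j'})\, (t_1t_y). \]
Commutativity of $S$ gives $t_1 s = s t_1$ as complex products for every $s\in S$, while thinness gives $t_1^\ast t_1=1_X$. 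Hence $t_1^\ast\, r(\beta_{i,j},\beta_{i',j'})\, t_1 = r(\beta_{i,j},\beta_{i',j'})$, and the two expressions above collapse to the same relation, proving $r(x,y)=r(x^\phi,y^\phi)$.

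The main obstacle I anticipate is formalizing the thin-shift identity cleanly within the paper's conventions: one must justify that expressions like $t_x^\ast\, r(\beta_{i,j},\beta_{i',j'})\, t_y$ are genuine single relations (which holds because $t_x,t_y$ are thin) and invoke commutativity at precisely the right spot to push $t_1$ past an arbitrary relation. Once those manipulations are granted, the faithfulness check of $\phi$ reduces to the single line above, and together with $\alpha^\phi=\beta$ this completes Proposition \ref{prop:tr3}.
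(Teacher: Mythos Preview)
Your argument is correct and follows essentially the same route as the paper: both decompose $r(x,y)$ through the base points $\beta_{i,j}$ using thinness of $\Ot(S)$, and then cancel the extra $t_1$-factors via commutativity ($t_1^\ast u\,t_1 = t_1^\ast t_1 u = u$). The only cosmetic difference is that the paper splits into three cases according to whether the middle relation $r(\beta_{i,j},\beta_{i',j'})$ lies in $S\setminus T$, in $\Or(S)\setminus H$, or equals $1_X$, whereas your commutativity line handles all three uniformly.
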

\begin{proof}

For $i, i' \in J$, let us take $(x,y) \in X_i \times X_{i'}$.
Then $x = \beta_{i,j} l_1$ and $y = \beta_{i',j'} l_2$ for some $j, j' \in J$ and $l_1, l_2 \in \Ot(S)$.
We have
\[r(x,y) = r(\beta_{i,j} l_1, \beta_{i,j}) r(\beta_{i,j}, \beta_{i',j'}) r(\beta_{i',j'}, \beta_{i',j'} l_2)\]
and
\[r(x^\phi, y^\phi) = r(\beta_{i,j} t_1l_1, \beta_{i,j}) r(\beta_{i,j}, \beta_{i',j'}) r(\beta_{i',j'}, \beta_{i',j'} t_1l_2).\]

(Case 1) $i \neq i'$ : we have $r(x^\phi, y^\phi) =  l_1^\ast t_1^\ast s_1^k t_1 l_2 = l_1^\ast s_1^k l_2 = r(x,y)$ for some $k \in J^\times$.

\vskip5pt
(Case 2) $i = i'$ and $j \neq j'$ : we have $r(x^\phi, y^\phi) = l_1^\ast t_1^\ast r_0^k t_1 l_2 = l_1^\ast r_0^k l_2 = r(x,y)$ for some $k \in J^\times$.

\vskip5pt
(Case 3) $i = i'$ and $j = j'$ : we have $r(x^\phi, y^\phi) = l_1^\ast l_2 = r(x,y)$.

\end{proof}

\vskip10pt
In the case $\mathrm{(IV)}$,
without loss of generality, we assume $\alpha \in X_0, \beta \in X_1$.
By Propositions \ref{prop:tr1}, \ref{prop:tr2} and \ref{prop:tr3},
we can assume $\alpha = \beta_{0,0} \in X_{(0,0)}$ and $\beta =  \beta_{1,0} \in X_{(1,0)}$.

Define $\phi : X \rightarrow X$ such that
for each $i \in J$
\[\phi|_{\bigcup_{l \in I^\circ} X_{i+l}} : \bigcup_{l \in I^\circ}X_{i+l} \rightarrow  \bigcup_{l \in I^\circ}X_{i+1+l} \text{~by~} (\beta_{i,j} t)^\phi = \beta_{i+1,j} t ~~(j \in J, t \in \Ot(S)).\]

\begin{prop}\label{prop:tr4}
For $\alpha, \beta \in X$ $(\alpha T \neq \beta T)$, $\phi : X \rightarrow X$ is a faithful map
such that $\alpha^\phi = \beta$.
\end{prop}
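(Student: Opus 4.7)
My plan is to check $r(x,y)=r(x^\phi,y^\phi)$ for every $(x,y)\in X\times X$, following the pattern of Propositions~\ref{prop:tr2} and~\ref{prop:tr3}.

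First I would fix $x\in X_k$, $y\in X_{k'}$ and write $k=k_0+lp$, $k'=k'_0+l'p$ with $k_0,k'_0\in J$ and $l,l'\in\{0,\dots,p-1\}$. Using the decomposition of $X_{k_0+lp}$ into $H$-orbits through the points $\beta_{k_0,j}$ together with the thinness of $t_1$, I write $x=\beta_{k_0,j}\,t_1^l h_1$ and $y=\beta_{k'_0,j'}\,t_1^{l'}h_2$ for unique $j,j'\in J$ and $h_1,h_2\in H$; then by definition of $\phi$, $x^\phi=\beta_{k_0+1,j}\,t_1^l h_1$ and $y^\phi=\beta_{k'_0+1,j'}\,t_1^{l'}h_2$. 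Since $S$ is commutative, thin elements commute with every relation in complex products, and the cocycle identity $r(a,ah)=h$ for thin $h$ yields
\[
r(x,y)=(t_1^l h_1)^\ast\cdot r(\beta_{k_0,j},\beta_{k'_0,j'})\cdot(t_1^{l'}h_2),
\]
and analogously for $r(x^\phi,y^\phi)$ with $k_0,k'_0$ replaced by $k_0+1,k'_0+1$. Cancelling the thin factors reduces the whole task to the identity
\[
(\star)\qquad r(\beta_{k_0,j},\beta_{k'_0,j'})=r(\beta_{k_0+1,j},\beta_{k'_0+1,j'}).
\]

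Next I split $(\star)$ into two cases. If $k_0=k'_0$, both sides lie in $\Or(S)$; as a commutative $p$-scheme of order $p^2$ with thin radical $H$ of order $p$, $\Or(S)$ is isomorphic to $H\wr C_p$, so the complex product $r_0^{j'-j}$ is a single element. Iterating $\beta_{i,m+1}\in\beta_{i,m}r_0$ then forces both sides of $(\star)$ to coincide with this element. If $k_0\neq k'_0$, both sides live in $S\setminus T$ and have valency $p^2$; in particular $\beta_{k_0,j}\,s=X_{k'_0}$ for each $s$ in the coset $\Or(S)s_1^{k'_0-k_0}$, since $|X_{k'_0}|=p^2$. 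I would exploit this abundance to arrange the reference points $\beta_{i+1,0}$ so that $r(\beta_{i,0},\beta_{i+1,0})$ is the same element $s_0\in S\setminus T$ for every $i\in J$; $(\star)$ then follows by a direct cocycle computation combining this coherence with the same-row case.

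The main obstacle is the cross-row case $k_0\neq k'_0$: making $(\star)$ hold uniformly is not automatic from the defining constraints on the $\beta$'s, since these do not link the different rows. My plan to resolve it is to use the freedom in the choice of $\beta_{i,0}\in X_i\cap E_{i,0}$ --- replacing the original choice at the outset with a compatible family --- or, if needed, to post-compose $\phi$ with faithful maps from Propositions~\ref{prop:tr1}--\ref{prop:tr3} to bring the targets into alignment. Once $(\star)$ is secured, $\phi$ is faithful, and $(\beta_{0,0})^\phi=\beta_{1,0}$ follows immediately, completing the proof.
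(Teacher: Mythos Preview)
Your reduction to the identity $(\star)$ is exactly what the paper does, and your treatment of the same-row case $k_0=k'_0$ is correct and matches the paper's Cases~2 and~3: since $\Or(S)_{x\Or(S)}\cong H\wr C_p$, the complex power $r_0^{\,j'-j}$ is a single relation and $(\star)$ follows from the defining property $\beta_{i,m+1}\in\beta_{i,m}r_0$.

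The gap is in the cross-row case $k_0\neq k'_0$. You treat $(\star)$ there as a genuine obstacle requiring a re-choice of the reference points $\beta_{i,0}$ or a post-composition fix, but in fact it is immediate from the standing hypothesis $n_s=p^2$ for every $s\in S\setminus T$. Since each $\Or(S)$-coset in $S$ has order $n_{\Or(S)}=p^2$, a coset contained in $S\setminus T$ consists of a \emph{single} relation; equivalently $\Or(S)s=s\Or(S)=\{s\}$, so every $s\in S\setminus T$ is constant on $\Or(S)\times\Or(S)$ blocks. Hence $r(\beta_{k_0,j},\beta_{k'_0,j'})$ depends only on the pair of $\Or(S)$-classes $(X_{k_0},X_{k'_0})$, and in the thin quotient $S\Qd\Or(S)$ this is $((s_1)^{\Or(S)})^{\,k'_0-k_0}$; in particular it depends only on $k'_0-k_0$, so both sides of $(\star)$ equal the unique element of that coset (this is the paper's Case~1, written as $l_1^\ast s_1^{\,k} l_2$). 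No rearrangement of the $\beta_{i,j}$ is needed --- and indeed none is permitted, since these points are fixed once at the start of Section~\ref{sec:main2} and are already used in Propositions~\ref{prop:one-point-2} and~\ref{prop:tr2}--\ref{prop:tr3} as well as in the very definition of the map $\phi$ you are analyzing.
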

\begin{proof}

For $i, i' \in J$, let us take $(x,y) \in X_i \times X_{i'}$.
Then $x = \beta_{i,j} l_1$ and $y = \beta_{i',j'} l_2$ for some $j, j' \in J$ and $l_1, l_2 \in \Ot(S)$.
We have
\[r(x,y) = r(\beta_{i,j} l_1, \beta_{i,j}) r(\beta_{i,j}, \beta_{i',j'}) r(\beta_{i',j'}, \beta_{i',j'} l_2)\]
and
\[r(x^\phi, y^\phi) = r(\beta_{i+1,j} l_1, \beta_{i+1,j}) r(\beta_{i+1,j}, \beta_{i'+1,j'}) r(\beta_{i'+1,j'}, \beta_{i'+1,j'} l_2).\]

(Case 1) $i \neq i'$ : we have $r(x^\phi, y^\phi) = l_1^\ast s_1^k l_2 = r(x,y)$ for some $k \in J^\times$.

\vskip5pt
(Case 2) $i = i'$ and $j \neq j'$ : we have $r(x^\phi, y^\phi) = l_1^\ast r_0^k l_2 = r(x,y)$ for some $k \in J^\times$.

\vskip5pt
(Case 3) $i = i'$ and $j = j'$ : we have $r(x^\phi, y^\phi) = l_1^\ast l_2 = r(x,y)$.
\end{proof}

It follows from Propositions \ref{prop:tr1} -- \ref{prop:tr4} that $\mathrm{Aut}(S)$ is transitive on $X$.

\bibstyle{plain}

\end{document}